\newcommand{\julgreen}{\textcolor[rgb]{0.22,0.60,0.15}}
\newcommand{\julblue}{\textcolor[rgb]{0.25,0.39,0.85}}
\newcommand{\julred}{\textcolor[rgb]{0.80,0.24,0.20}}
\lstdefinelanguage{Julia}{
  morekeywords={function,return,end,for,while,if,else,elseif,break,continue},
  morekeywords=[2]{}, 
  sensitive=true,
  morecomment=[l]\#,
  morestring=[b]"
}
\begin{document}

\title{A Practical Guide to Rigorously Locate \\ Periodic Orbits in Discrete Dynamics}
\titlerunning{Computer-assisted Proofs}

\author{Lucia Alonso Mozo$^{1,*}$
\quad Olivier H\'enot$^{2}$
\quad Phillipo Lappicy$^3$}
\authorrunning{L. A. Mozo, O. H\'enot, P. Lappicy}
\address{ 
$^{1}$Universidad Complutense de Madrid, Spain. \quad  \texttt{lucia.alonso.mozo@gmail.com} \\ 
$^{*}$This author is now at Google Zurich, Switzerland. \\ 
$^{2}$National Taiwan University, Taiwan. \quad\texttt{olivierhenot@ntu.edu.tw}\\
$^{3}$Universidad Complutense de Madrid, Spain. \quad\texttt{philemos@ucm.es}
}

\email{}

        \abstract{Periodic orbits are important objects of discrete dynamical systems, but finding them is not always easy. We present a self-contained introductory account, aimed at non-experts, to prove their existence and study their stability using the aid of the computer. The method consists in three main steps. 
        First, we reformulate the problem of identifying a $p$-periodic orbit as a root-finding problem. 
        Second, we find a numerical approximation of the root (i.e. a periodic orbit candidate). 
        Third, we verify rigorously the contraction of a quasi-Newton operator near this approximation, which guarantees the existence of a unique root (i.e. periodic orbit). The neighbourhood of contraction is a ball centered at the approximation, whose radius yields a rigorous a posteriori error bound on the numerical approximation. 
        \newline
        To illustrate the effectiveness of this method, we implement it in two examples: the well-known logistic map and a discretization of a predator prey model.
        For the logistic map, we prove the existence of more than $80\cdot 10^2$ periodic orbits of periods $p=1,\ldots , 80$, mostly unstable. 
        For the predator-prey model, we rigorously detect over $80\cdot 10^4$ periodic orbits of periods $p=1,\ldots, 10$, mostly unstable as well. 
        %
        %
        This confirms well-known dynamical features such as period-doubling bifurcations and the emergence of increasingly complex orbit structures as the parameter changes. 
        %
        %
}

\keywords{Discrete dynamical systems, periodic orbits, stability, computer-assisted proofs.}

\msc{37E05; 37G15; 37N25; 65G30; 92D25.}


\makeppage

\vskip .25em%
  
\tableofcontents

\pagestyle{plain}

\newpage


\section{Introduction}

Numerical analysis has thrived in the development of algorithms that provide accurate and efficient approximations for problems in linear and nonlinear algebra, differential equations, optimization, quadrature, and more.
More broadly across mathematics, the computer has become a valuable research companion.
What is sometimes overlooked, however, is that computers can further assist in the construction of rigorous mathematical proofs. 
This paper aims to offer an accessible introduction to computer-assisted proofs in discrete dynamical systems, specifically for models that arise in the context of mathematical biology.

Computer-assisted proofs 
encompass wide spectrum of approaches, ranging from formal proofs to validated numerics.
What concerns us here are \emph{a posteriori validation methods}, in which a given numerical simulation is rigorously verified.
With the seminal work of Lanford on the Feigenbaum conjecture in the 80's,~\cite{Lanford}, it further developed with the work of Nakao in Japan, Plum in Germany, Mischaikow in the United-States, Mrozek and Zgliczy\'nski in Poland, and more recently Lessard in Canada; see \cite{Mischaikow95,Piotr97,Lessard16,NakaoPlum}.
Some notable achievements pertaining to dynamical systems include the resolution of Smale's 14th problem \cite{tucker2002rigorous}, the existence of transient chaos in the Kuramoto-Sivashinksky PDE \cite{Wilczak}, the proof of Jones' conjecture \cite{Jaquette}, and the proof of Marchal's conjecture \cite{Marchal}.
See also the survey articles~\cite{van2015rigorous,MirelesMischaikow,gomez2019computer}.


We consider the discrete dynamical system defined by the iteration of an initial state $x_0 \in \mathbb{R}^n$ under a diffeomorphism $f : \mathbb{R}^n \to \mathbb{R}^n$,
\begin{equation} \label{eq:sistema_general}
x_k = f(x_{k-1}) = f^k (x_0), \qquad k = 1, 2, \dots.
\end{equation}
The map $f$ may stem directly from a discrete-time model, or arise through the Poincar\'e map of a differential equation.
Invariant sets, and in particular periodic orbits, serve as anchors in the phase space, around which the global dynamics is organized.
However, finding them analytically is generally difficult, see for example~\cite[Ex. 1.4.4]{guckenheimer2013nonlinear}.
We develop both a theoretical framework and an algorithm to prove the existence of stable and unstable periodic orbits of any given period, even in settings that display chaotic behavior.
In addition, we introduce a uniform contraction argument to rigorously trace the curves of period-doubling bifurcations, which separate regions of period-$p$ and period-$2p$ in parameter space.
To illustrate the approach, we apply these techniques to two case studies: the logistic map, as a toy model, and a predator-prey system.
In both cases, we detected periodic orbits of various lengths, from period $p = 2$ to $p = 80$, with a high numerical precision (i.e., with error margins of at least $10^{-11}$).

All the files used to obtain the results presented in the upcoming sections can be found in the GitHub repository available at \cite{githubGitHubLuciaalonsomozoCAPs}.

\subsection{A short note on interval arithmetic}
\label{sec:num_representables}

The standard arithmetic in computers relies on \emph{binary floating-point arithmetic}, see~\cite{goldberg1991what}.
Barring precision limitations, a (reduced) rational number $p/q$ has an exact binary representation if and only if its denominator $q$ is a power of $2$.
For example, $1/4$ is exactly $0.01_2$ in base-2, while $1/10$ has the infinite repeating representation $0.0001100110011\ldots_2$.
Even then, working in finite precision induces rounding errors; numerical results for ill-conditioned problems, or computations sensitive to small perturbations, are particularly prone to deviate from the true solution.

In the 60's, Moore addressed these constraints by introducing the \emph{interval arithmetic} \cite{Moore66}.
Today there is an extensive literature on this topic, see \cite[Chapter 2]{Tucker11} and \cite{IEEE1788}.
Yet, in the interest of preserving the clarity of what follows, let us outline the central ideas.
The approach is to forgo the manipulation of exact mathematical numbers, and instead work with the set of intervals:
\begin{equation}
\mathbb{IF} \coloneqq \left\{ [\underline{a}, \overline{a}] \subset \mathbb{R} \, : \, \underline{a} \le \overline{a}, \, \underline{a}, \overline{a} \in \mathbb{F} \right\},
\end{equation}
where $\mathbb{F} \subset \mathbb{R}$ denotes the set of floating-point numbers.
%
The precision determining $\mathbb{F}$ controls how tightly a real number $\mathbb{R}$ can be represented in $\mathbb{IF}$.
Letting $\nabla, \Delta : \mathbb{R} \to \mathbb{F}$ respectively denote the rounding down and up operations, then we have that $\pi \in [\nabla(\pi), \Delta(\pi)]$, for example.
A rigorous enclosure of arithmetic operations $+, -, \times, \div$ is then defined over $\mathbb{IF}$, e.g. $[\underline{a}, \overline{a}] +_\mathbb{IF} [\underline{b}, \overline{b}] \coloneqq [\nabla(\underline{a} + \underline{b}), \Delta(\overline{a} + \overline{b})]$.
Further definitions exist for elementary functions and other special functions, e.g. the exponential, the logarithm, trigonometric functions, Bessel function, gamma function.

There are many software libraries currently available in various programming language, such as \texttt{IntervalArithmetic} (Julia), \texttt{INTLAB} (Matlab), and \texttt{MPFI} (C++) to name a few.


\section{Existence, local uniqueness and stability}
\label{sec:cap}

Our goal is to lay out a practical framework to ``locate'' periodic orbits and establish their stability.
Here, ``locate'' is understood as ``determining a small neighborhood containing the true periodic solution''.
For a brief introduction on discrete dynamical system theory, see \cite[Chapter 1.4]{guckenheimer2013nonlinear}; whereas for a thorough review, see \cite{devaney2018introduction} and references therein.
For the purposes of this article, recall the following definition:
\begin{definition}
We say that $x^\star_0 \in \mathbb{R}^n$ is a periodic point of period $p \ge 1$ if $x^\star_0 = f^p(x^\star_0)$ and $f^k(x^\star_0) \ne f^j(x^\star_0)$ for all $k \ne j \in \{ 0, \ldots , p-1\}$.
Moreover, the $p$-periodic point $x^\star_0 \in \mathbb{R}^n$ is called: 

(i) stable if for all $\epsilon > 0$, there exists $\delta > 0$ such that $\|x_0 - x^\star_0\| < \delta$ implies that $\|f^{pk}(x_0) - x^\star_0\| < \epsilon$ for all $k\geq 1$.

(ii) asymptotically stable if it is stable and there exists $\tilde{\delta} > 0$ such that for all $x_0\in \mathbb{R}^n$ satisfying $\| x_0 - x^\star_0 \| \le \tilde{\delta}$, then $
\lim_{k \to \infty} f^{pk}(x_0) = x^\star_0$.



(iii) unstable if it is not stable, i.e.,
if there exists $\epsilon > 0$ such that for all $\delta > 0$, there is some $x_0\in \mathbb{R}^n$ satisfying $\|x_0 - x^\star_0\| < \delta$ such that $\|f^{pk}(x_0) - x^\star_0\| \geq \epsilon$ for some $k\geq 1$.
\end{definition}
To distinguish the stability of a periodic point $x^\star_0\in \mathbb{R}^n$ of period $p$, we typically use the following linear criterion based on eigenvalues $\lambda_1, \dots, \lambda_n$ of the matrix $Df^p(x^\star_0)$, see \cite[Chapter 1.4]{devaney2018introduction}. If $|\lambda_i| < 1$ for all $i = 1, \dots, n$, then $x^\star_0\in \mathbb{R}^n $ is asymptotically stable. If $|\lambda_i| > 1$ for some $i \in \{ 1, \dots, n\}$, then $x^\star\in \mathbb{R}^n $ is unstable.   
Whenever $|\lambda_i| = 1$ for some $i \in \{1, \dots, n\}$, the linear stability criterion is inconclusive.

We begin by observing that it is considerably simpler to represent a periodic point through its iterates rather than attempting to write down the $p$-fold composition of $f$ by itself, i.e., $f^p$.
%
%
Hence, we recast the problem of finding periodic points as the search for a zero of the map $F : \mathbb{R}^{np} \to \mathbb{R}^{np}$ given by
\begin{equation}\label{eq:def_F}
F(x) \coloneqq 
\begin{pmatrix*}
f(x_{p-1}) - x_0 \\
f(x_0) - x_1 \\
\vdots \\
f(x_{p-2}) - x_{p-1}
\end{pmatrix*},
\end{equation}
where $x = (x_0,\ldots, x_{p-1})\in \mathbb{R}^{np}$, such that each component $x_k \in \mathbb{R}^{n}$ for $k=0,\ldots, p-1$.
%
%
Therefore, if $F(x^\star)=0$ for some $x^\star=(x^\star_0,\ldots, x^\star_{p-1}) \in  \mathbb{R}^{np}$ and $x^\star_k\neq x^\star_j$ for all $k\neq j \in \{0,\ldots, p-1\}$, then $x^\star_0\in\mathbb{R}^n$ yields a $p$-periodic point orbit of the dynamical system~\eqref{eq:sistema_general}.
This motivates our abuse of notation, in which we denote by $x_k\in \mathbb{R}^{n}$ both the iterates of the discrete dynamical system in~\eqref{eq:sistema_general} and the $k$-th coordinate of the variable $x = (x_0, \dots, x_{p-1}) \in \mathbb{R}^{np}$.

Our strategy consists in obtaining a ``good enough'' numerical approximation $\bar{x} = (\bar{x}_0, \dots, \bar{x}_{p-1})$ of the periodic orbit (i.e., $F(\bar{x}) \approx 0$), and then construct an operator, contracting ``near'' $\bar{x}$, whose fixed-points are periodic orbits.
This is referred to as an \emph{a posteriori validation method}, in which notions of ``good enough'' and ``near'' will be explicitly and rigorously quantified.
Hence, we seek to prove that the following quasi-Newton operator is a contraction:
\begin{equation}\label{eq:def_T}
x \mapsto x - A F(x),
\end{equation}
where $A : \mathbb{R}^{np} \to \mathbb{R}^{np}$ is a linear operator, constructed such that
\begin{equation}\label{AapproxDF}
A \approx DF(\bar{x})^{-1}.
\end{equation}
Many numerical strategies are available to identify a numerical zero $\bar{x}$ of $F$, see \cite[Chapter 2]{Atkinson}.
However, Newton's method, defined by the iterates $y_k = y_{k-1} - DF(y_{k-1})^{-1} F(y_{k-1})$ for $k = 1, 2, \dots$, 
%
%
ought to be employed at some stage to refine $\bar{x}$ to a prescribed numerical tolerance.
Near an isolated zero of $F$, convergence of the $y_k$'s is quadratic; in fact, a numerical failure to converge quadratically might signal that the contraction operator given in \eqref{eq:def_T} is not suitable.

Assuming that $\bar{x}$ and $A \approx DF(\bar{x})^{-1}$ have been computed numerically, the following theorem provides sufficient conditions for $x \mapsto x - A F(x)$ to be a contraction in a neighborhood $\bar{x}$.
The theorem is stated without restricting ourselves to finite-dimensional problems.
\begin{notation}
Henceforth, given a Banach space $\mathcal{X}$, the closed ball of radius $r \ge 0$ and centered at $x \in \mathcal{X}$ is denoted by $B_r(x) \coloneqq \left\{ y \in \mathcal{X} \, : \, \| y - x \|_\mathcal{X} \le r \right\}$.
Moreover, the set of bounded linear operators from $\mathcal{X}$ to itself is denoted by $\mathscr{B}(\mathcal{X})$.
\end{notation}
\begin{theorem}\label{thm:contraction}
Let $\mathcal{X}$ be a Banach space, $\bar{x} \in \mathcal{X}$, $F \in \mathcal{X} \to \mathcal{X}$ a $C^1$ map, and $A : \mathcal{X} \to \mathcal{X}$ an injective linear map.
Fix $R \in [0, \infty]$ and assume the existence of $Y$, $Z_1$, and $Z_2 = Z_2(R)$ such that
\begin{subequations}\label{eq:bounds}
\begin{align}
\|AF(\bar{x})\|_\mathcal{X} &\le Y, \label{eq:boundsY}\\
\|I - ADF(\bar{x}) \|_{\mathscr{B}(\mathcal{X})} &\le Z_1, \label{eq:boundsZ1}\\
\|A(DF(x) - DF(\bar{x}))\|_{\mathscr{B}(\mathcal{X})} &\le Z_2 \| x- \bar{x}\|_\mathcal{X}, \quad \text{for all } x \in B_R(\bar{x}).\label{eq:boundsZ2}
\end{align}
\end{subequations}
If there exists $r \in [0, R]$ such that
\begin{subequations}\label{eq:cond}
\begin{align}
P(r) \coloneqq Y + r (Z_1 - 1) + \frac{r^2}{2} Z_2 &\le 0\\
Z_1 + r Z_2 &< 1,\label{eq:cond_Z1}
\end{align}
\end{subequations}
%
then there exists a unique $x^\star \in B_r (\bar{x})$ such that $F(x^\star) = 0$.
\end{theorem}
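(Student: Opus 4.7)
The plan is to recast the zero-finding problem as a fixed-point problem for the quasi-Newton operator $T(x) \coloneqq x - AF(x)$ and apply the Banach fixed point theorem on the closed ball $B_r(\bar{x})$. Since $A$ is injective, zeros of $F$ in $B_r(\bar{x})$ correspond exactly to fixed points of $T$ there, so it suffices to verify that $T$ maps $B_r(\bar{x})$ into itself and is a strict contraction on it.

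For the self-mapping property, I would fix $x \in B_r(\bar{x})$ and decompose
\begin{equation*}
T(x) - \bar{x} = -AF(\bar{x}) + (I - ADF(\bar{x}))(x - \bar{x}) - A\bigl(F(x) - F(\bar{x}) - DF(\bar{x})(x - \bar{x})\bigr),
\end{equation*}
using the fundamental theorem of calculus to write the last term as $\int_0^1 A\bigl(DF(\bar{x} + t(x-\bar{x})) - DF(\bar{x})\bigr)(x-\bar{x})\, dt$. The hypothesis \eqref{eq:boundsZ2} (applicable because the segment from $\bar{x}$ to $x$ stays in $B_R(\bar{x})$, as $r \le R$) gives the bound $\tfrac{Z_2}{2}\|x-\bar{x}\|_\mathcal{X}^2$ after integrating $t$ from $0$ to $1$. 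Combined with \eqref{eq:boundsY} and \eqref{eq:boundsZ1}, the triangle inequality yields
\begin{equation*}
\|T(x) - \bar{x}\|_\mathcal{X} \le Y + Z_1 r + \tfrac{1}{2} Z_2 r^2 = P(r) + r \le r,
\end{equation*}
so $T(B_r(\bar{x})) \subseteq B_r(\bar{x})$.

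For the contraction property, I would pick $x,y \in B_r(\bar{x})$ and write
\begin{equation*}
T(x) - T(y) = (I - ADF(\bar{x}))(x-y) - \int_0^1 A\bigl(DF(y + t(x-y)) - DF(\bar{x})\bigr)(x-y)\, dt,
\end{equation*}
noting that $y + t(x-y)$ stays inside $B_r(\bar{x}) \subseteq B_R(\bar{x})$ by convexity. Applying \eqref{eq:boundsZ1} to the first term and \eqref{eq:boundsZ2} (with $\|y+t(x-y)-\bar{x}\|_\mathcal{X} \le r$) to the integrand gives
\begin{equation*}
\|T(x) - T(y)\|_\mathcal{X} \le (Z_1 + r Z_2)\, \|x-y\|_\mathcal{X},
\end{equation*}
and the Lipschitz constant is strictly less than $1$ by \eqref{eq:cond_Z1}. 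The Banach fixed point theorem then produces a unique $x^\star \in B_r(\bar{x})$ with $T(x^\star) = x^\star$, i.e.\ $AF(x^\star) = 0$; injectivity of $A$ converts this to $F(x^\star) = 0$, and uniqueness of the zero in $B_r(\bar{x})$ follows from uniqueness of the fixed point.

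There is no real obstacle here; the argument is a standard Newton--Kantorovich-type contraction. The only place where some care is needed is the radii-polynomial bookkeeping: one must ensure the intermediate points in the Taylor-type remainder stay within $B_R(\bar{x})$ (guaranteed by $r \le R$ and convexity), and that both conditions in \eqref{eq:cond} are used in the right places---$P(r) \le 0$ for the self-mapping estimate and $Z_1 + rZ_2 < 1$ for the strict contraction.
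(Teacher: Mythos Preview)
Your argument is correct and is precisely the standard radii-polynomial contraction proof. The paper itself does not give a proof but simply refers the reader to \cite[Theorem 1.2.1]{Breden25}; your write-up is essentially what one finds there, so there is nothing to compare.
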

\begin{proof}
See, for example \cite[Theorem 1.2.1]{Breden25}, for a complete proof.
\end{proof}
\begin{remark}\label{rem:rpa}
We emphasize three points in regard to \Cref{thm:contraction}:
\begin{enumerate}
\item \textbf{Interpretation of $Y$, $Z_1$, $Z_2$, $R$.}
Each bound in \eqref{eq:bounds} should be read as follows:
\begin{itemize}  
\item $Y$ measures how close $\bar{x}$ is to the true zero $x^\star$ of $F$.
\item $Z_1$ measures how accurately $A$ approximates $DF(\bar{x})^{-1}$.
\item $Z_2$ measures the local variation (Lipschitz constant) of $x \mapsto A D F(x)$ in $B_R (\bar{x})$.
\item $R$ is an fixed radius, viewed as an a priori error threshold on $\bar{x}$.
In practice, this is often chosen ad hoc by means of heuristics, depending in particular on $Y$.
\end{itemize}
Any radius $r$ satisfying the inequalities \eqref{eq:cond} gives a rigorous error bound on $\bar{x}$; the smallest such radius yields the tightest error bound, while the largest yields the biggest ball within which \Cref{thm:contraction} guarantees the uniqueness of the zero $x^\star$ of $F$.

\item \textbf{Choice of $A$.}
In finite dimensions, it is possible to take $A = DF(\bar{x})^{-1}$, where we use interval arithmetic to enclose $DF(\bar{x})^{-1}$ so that $A$ is stored as a matrix with interval entries.
Such a choice would give $Z_1 = 0$, which is optimal.
However, in practice this is generally a marginal gain compared to the computational cost of inverting interval matrices.

\item \textbf{Injectivity of $A$.}
The injectivity of $A$ may not need to be verified a priori; it can follow from the computation of $Z_1$ satisfying \eqref{eq:boundsZ1} and the condition~\eqref{eq:cond_Z1}.
Specifically, if $Z_1 < 1$, then $A DF(\bar{x})$ is invertible, which implies that $A$ is surjective.
In finite dimensions, $\dim \mathcal{X} < \infty$, we always have that surjectivity implies injectivity, and so verifying the condition~\eqref{eq:cond_Z1} implies that $A$ is injective.
In infinite dimensions, whether surjectivity is enough to guarantee injectivity depends on the structure of $A$, though ensuring this property is typically pursued.
\end{enumerate}
\end{remark}

\begin{remark}\label{rem:APP}
Applying \Cref{thm:contraction} can be summarized in the following steps:
\begin{enumerate}
\item Formulate the problem (e.g., in the context of this section, finding periodic points of a discrete dynamical system) as a zero-finding problem of a map $F : \mathcal{X} \to \mathcal{X}$.
\item Compute \emph{numerically} (e.g., using Newton's method) an approximation $\bar{x}\in \mathcal{X}$ of a zero of $F$ and an approximation $A$ of the inverse of $DF(\bar{x})$.
\item Compute \emph{rigorously} (e.g., using interval arithmetic) the bounds $Y$, $Z_1$, $Z_2$, and the roots $r_\pm$ of $P(r)$ and, if these are real, check that there exists $r \in [r_-, r_+] \cap [0, R]$ such that the inequalities \eqref{eq:cond} are satisfied.
\end{enumerate}
\end{remark}

For the existence of periodic points, the map $F$ is given in \eqref{eq:def_F} and the space for the contraction is $\mathcal{X} = \mathbb{R}^{np}$ endowed with the $1$-norm.
Specifically, for any $x = (x_1, \dots, x_{np}) \in \mathbb{R}^{np}$ and $A\in \mathscr{B}(\mathbb{R}^{np})$, we have
\begin{equation}\label{eq:ell1}
\|x\|_\mathcal{X} \coloneqq \sum_{i = 1}^{np} |x_i|, \qquad \|A\|_{\mathscr{B}(\mathcal{X})} \coloneqq \max_{1\le j\le np} \sum_{i = 1}^{np} |A_{ij}|.
\end{equation}
Finally, it remains to check that $x^\star \in \mathbb{R}^{np}$ yields a genuine period-$p$ orbit of $f$, i.e., not an orbit of smaller period.
This is the case provided that all the $p$-coordinates of $x^\star = (x^\star_0,\ldots, x^\star_{p-1}) \in \mathbb{R}^{np}$ are distinct.
Since $x^\star_k \in B_r(\bar{x}_k)\subset \mathbb{R}^n$ for $k = 0, \dots, p-1$, 
we must show that the neighborhoods of all $\bar{x}_k$ with the a posteriori error bound $r>0$ are disjoint, i.e.,
\begin{equation}
\bigcap_{k=0}^{p-1}  B_r(\bar{x}_k) = \emptyset.
\end{equation}
%
%
%
%
%
%
Now, assuming that a period-$p$ orbit $x_0^\star$ of $f$ has been determined, we seek to retrieve the eigenvalues of the Jacobian of $f^p (x^\star_0)$.
The chain rule yields
\begin{align}
Df^{p}(x_0^\star)
&= Df(f^{p-1}(x_0^\star)) \cdot Df(f^{p-2}(x_0^\star)) \cdots Df(x_0^\star) \nonumber \\
&= Df(x_{p-1}^\star) \cdot Df(x_{p-2}^\star) \cdots Df(x_0^\star).
\label{eq:EV}
\end{align}
For $n=1$, which is the case for the examples we consider in the upcoming sections, the unique eigenvalue is given by the value $\lambda = Df^p(x_0^\star)$ and it can be computed \emph{rigorously} (e.g., using interval arithmetic) through~\eqref{eq:EV}.
For $n>1$, to capture the spectrum $\sigma(Df^p(x_0^\star))=\{\lambda_1,\dots,\lambda_n\}$, one can use the Gershgorin circle theorem to encapsulate the eigenvalues within appropriate disks.
%

\subsection{Example: The logistic map}\label{subsec:logistic}

This first application is a step-by-step example.
Alongside each formula, we include a code snippet to indicate how the implementation of the computer-assisted proof looks like.
We follow Julia's syntax, the programming language used for all of our computations and proofs; see \cite{githubGitHubLuciaalonsomozoCAPs}.

The logistic map

\begin{minipage}[c]{0.45\linewidth}
\begin{equation}\label{logisticmap}
f_\mu (x) \coloneqq \mu x (1 - x),
\end{equation}
\end{minipage}%
\hfill%
\begin{minipage}[c]{0.45\linewidth}
\begin{tcolorbox}[colback=white, colframe=black!40, boxrule=0.5pt, arc=2mm, boxsep=0mm]
\begin{lstlisting}[language=Julia, xleftmargin=0pt, xrightmargin=0pt, aboveskip=0pt, belowskip=0pt]
function f(x, mu)
    return mu * x * (1 - x)
end
\end{lstlisting}
\end{tcolorbox}
\end{minipage}

where $\mu \in [0,4]$ and $x \in [0,1]$, was first studied by Lorenz \cite{lorenz1964problem} and later by May \cite{may1976simple}; it illustrates how varying $\mu$ can lead to complex dynamics 
as $\mu$ approaches $4$.
See~\cite{Stewart} for a brief overview of the logistic map, whereas~\cite[Chapter 4]{SymmInChaos} and \cite{devaney2018introduction} provide a broad discussion of its dynamics. 
%

Let us prove the existence of a stable period-2 point for $\mu = 3.2$.
This point can be found explicitly, see~\cite{lorenz1964problem}; we choose it here for the sake of simplicity, to clearly illustrate the general case of a period-$p$ point.
As it is needed for our proof, the derivative of the logistic map reads

\begin{minipage}[c]{0.45\linewidth}
\begin{equation}
f_\mu'(x) = \frac{\mathrm{d}}{\mathrm{d}x} f_\mu(x) = \mu (1 - 2x).
\end{equation}
\end{minipage}
\hfill
\begin{minipage}[c]{0.45\linewidth}
\begin{tcolorbox}[colback=white, colframe=black!40, boxrule=0.5pt, arc=2mm, boxsep=0mm]
\begin{lstlisting}[language=Julia, xleftmargin=0pt, xrightmargin=0pt, aboveskip=0pt, belowskip=0pt]
function Df(x, mu)
    return mu * (1 - 2 * x)
end
\end{lstlisting}
\end{tcolorbox}
\end{minipage}

\textbf{Step 1 (reformulate the problem).}
We define $F : \mathbb{R}^{2} \to \mathbb{R}^{2}$ as the particular case of \eqref{eq:def_F} with $p=2$ and $n=1$, where a zero of $F$ corresponds to a period-2 point. This yields the following map and its Jacobian matrix:

\begin{minipage}[c]{0.45\linewidth}
\begin{equation}
F_\mu (x) \coloneqq
\begin{pmatrix}
f_\mu (x_1) - x_0 \\
f_\mu (x_0) - x_1 \\
\end{pmatrix},
\end{equation}
\end{minipage}
\hfill
\begin{minipage}[c]{0.5\linewidth}
\begin{tcolorbox}[colback=white, colframe=black!40, boxrule=0.5pt, arc=2mm, boxsep=0mm]
\begin{lstlisting}[language=Julia, xleftmargin=0pt, xrightmargin=0pt, aboveskip=0pt, belowskip=0pt]
function F(x, mu)
    return [f(x[2], mu) - x[1]
            f(x[1], mu) - x[2]]
end
\end{lstlisting}
\end{tcolorbox}
\end{minipage}
\begin{minipage}[c]{0.45\linewidth}
\begin{equation}
DF_\mu (x) =
\begin{pmatrix}
-1 & f_\mu '(x_0) \\
f_\mu '(x_1) & -1
\end{pmatrix}.
\end{equation}
\end{minipage}
\hfill
\begin{minipage}[c]{0.5\linewidth}
\begin{tcolorbox}[colback=white, colframe=black!40, boxrule=0.5pt, arc=2mm, boxsep=0mm]
\begin{lstlisting}[language=Julia, xleftmargin=0pt, xrightmargin=0pt, aboveskip=0pt, belowskip=0pt]
function DF(x, mu)
    return [-1  Df(x[2], mu)
            Df(x[1], mu)  -1]
end
\end{lstlisting}
\end{tcolorbox}
\end{minipage}

\textbf{Step 2 (numerical approximation).}
There are two quantities that are computed numerically: an approximate zero $\bar{x}$ of $F_\mu$ and an approximate Jacobian matrix $A$ of $DF_\mu (\bar{x})^{-1}$.

First, we obtain a numerical approximation $\bar{x}=(\bar{x}_0 ,\bar{x}_1)$ of the period-2 point, for some $\bar{\mu}\approx 3.2$, using Newton's method; see~\cite{githubGitHubLuciaalonsomozoCAPs} on how to implement it.
For simplicity, let us proceed with the proof using only the first two digits of the approximation:

%
\begin{minipage}[c]{0.4\linewidth}
\begin{equation}\label{eq:approx}
\begin{aligned}
\bar{\mu} &\approx 3.2, \\
\bar{x} &\approx (0.51, 0.79).
\end{aligned}
\end{equation}
\end{minipage}
\hfill
\begin{minipage}[c]{0.55\linewidth}
\begin{tcolorbox}[colback=white, colframe=black!40, boxrule=0.5pt, arc=2mm, boxsep=0mm]
\begin{lstlisting}[language=Julia, xleftmargin=0pt, xrightmargin=0pt, aboveskip=0pt, belowskip=0pt]
# Def. approx. parameter and orbit
mu_bar = 3.2
x_bar = [0.51, 0.79]
\end{lstlisting}
\end{tcolorbox}
\end{minipage}

We stress once more that 32/10, 51/100 and 79/100 cannot be represented exactly in a computer (see \Cref{sec:num_representables}), and thus the $\approx$ symbol highlights that $\bar{\mu}$, $\bar{x}_0$, $\bar{x}_1$ correspond to the closest 64-bit binary floating-point number, respectively.

Second, we obtain an approximate inverse $A$ of the Jacobian $DF_{\bar{\mu}} (\bar{x})$, as in \eqref{AapproxDF}.
In practice, the matrix $A$ is computed as a numerical approximation of $DF_{\bar{\mu}} (\bar{x})^{-1}$ via an algorithm implemented in a function \texttt{inv}, with the following outcome:

%
\begin{minipage}[c]{0.58\linewidth}
 \begin{equation}\label{logistic:A}
 A \approx \begin{pmatrix*}
 2.10618055051 & -1.1347955552327 \\
 -1.1347955552327 & 0.07262691553489
%
 \end{pmatrix*}.
 \end{equation}
\end{minipage}
\hfill
\begin{minipage}[c]{0.4\linewidth}
\begin{tcolorbox}[colback=white, colframe=black!40, boxrule=0.5pt, arc=2mm, boxsep=0mm]
\begin{lstlisting}[language=Julia, xleftmargin=0pt, xrightmargin=0pt, aboveskip=0pt, belowskip=0pt]
# Compute approx. inverse
A = inv(DF(x_bar, mu_bar))
\end{lstlisting}
\end{tcolorbox}
\end{minipage}

Note that in this case of low dimension ($n=1$) and low period ($p=2$), we can find the exact expression for $DF_\mu (0.51, 0.79)$ and $DF_\mu (0.51, 0.79)^{-1}$ for the exact parameter value $\mu=3.2$:
\begin{equation}\label{logistic:A_true}
DF_\mu (0.51, 0.79) = 
\begin{pmatrix}
-\frac{8}{125} & -1 \\
-1 & -\frac{232}{125}
\end{pmatrix},
\qquad
DF_\mu (0.51, 0.79)^{-1} = \frac{125}{13769}
\begin{pmatrix}
232 & -125 \\
-125 & 8
\end{pmatrix}.
\end{equation}
Hence, the numerical approximation in~\eqref{logistic:A} is close to the exact values in~\eqref{logistic:A_true}.
However, for larger dimensional problems, it is more effective to use a dedicated numerical algorithm \texttt{inv}.

\textbf{Step 3 (rigorous -- interval arithmetic).}
We verify the hypothesis of \Cref{thm:contraction}, where we recall that $\mathcal{X}=\mathbb{R}^p$ is equipped with the $1$-norm in \eqref{eq:ell1}.
We have that $Z_2 = 2\mu \|A\|_{\mathscr{B}(\mathcal{X})}$ from
%
%
%
\begin{equation}
\|A(DF_\mu(x) - DF_\mu(\bar{x}))\|_{\mathscr{B}(\mathcal{X})} 
\le \|A\|_{\mathscr{B}(\mathcal{X})} \|DF_\mu(x) - DF_\mu(\bar{x})\|_{\mathscr{B}(\mathcal{X})} 
= 2\mu \|A\|_{\mathscr{B}(\mathcal{X})} \|x - \bar{x}\|_\mathcal{X}.
\end{equation}
Since the map $F_\mu$ is quadratic, this inequality holds on the entire space $\mathbb{R}^p$ and hence we can choose $R = \infty$.
We can now rigorously compute the bounds $Y,Z_1,Z_2$ using interval arithmetic.
In practice, this requires that each previous numerical value must be wrapped inside an interval via a function \texttt{interval} (e.g., available in the software \texttt{IntervalArithmetic} \cite{interval}); doing so:

\begin{minipage}[c]{0.35\linewidth}
\begin{equation}\label{logistic:YZ1Z2}
\begin{aligned}
Y   &= \|A F_\mu(\bar{x})\|_\mathcal{X} \\
    &\in [0.012775, 0.0127751], \\
Z_1 &= \|I - A DF_\mu(\bar{x})\|_{\mathscr{B}(\mathcal{X})} \\
    &\in [0.0, 6.66134 \cdot 10^{-16}], \\
Z_2 &= 2 \mu \|A\|_{\mathscr{B}(\mathcal{X})} \\
    &\in [20.7422, 20.7423].
\end{aligned}
\end{equation}
\end{minipage}
\hfill
\begin{minipage}[c]{0.6\linewidth}
\begin{tcolorbox}[colback=white, colframe=black!40, boxrule=0.5pt, arc=2mm, boxsep=0mm]
\begin{lstlisting}[language=Julia, xleftmargin=0pt, xrightmargin=0pt, aboveskip=0pt, belowskip=0pt]
# Enclosure of mu = 32/10 and floats
imu = interval(32)/interval(10)
ix_bar = interval(x_bar)
iA = interval(A)
# Compute rigorously the bounds
Y = norm(iA * F(ix_bar, imu), 1)
Z_1 = opnorm(I - iA * DF(ix_bar, imu), 1)
Z_2 = 2 * imu * opnorm(iA, 1)
\end{lstlisting}
\end{tcolorbox}
\end{minipage}
%
%
%
%
The next task is to compute rigorously the roots $r_\pm$ of $P(r)$, and check that there exists some $r \in [r_-, r_+]$ satisfying both inequalities \eqref{eq:cond}: 

\begin{minipage}[c]{0.45\linewidth}
\begin{equation}\label{per2:errorroots}
r_\pm = \frac{1-Z_1 \pm \sqrt{(1-Z_1)^2 - 2YZ_2}}{Z_2}.
\end{equation}
\end{minipage}
\hfill
\begin{minipage}[c]{0.55\linewidth}
\begin{tcolorbox}[colback=white, colframe=black!40, boxrule=0.5pt, arc=2mm, boxsep=0mm]
\begin{lstlisting}[language=Julia, xleftmargin=0pt, xrightmargin=0pt, aboveskip=0pt, belowskip=0pt]
# Find the smallest root r_min
delta = (1 - Z_1)^2 - 2 Y * Z_2
r_m = (1 - Z_1 - sqrt(delta)) / Z_2
\end{lstlisting}
\end{tcolorbox}
\end{minipage}
For the sake a reporting a value expressible exactly in both base-2 and base-10, we choose
\begin{equation}\label{per2:error}
r_* \coloneqq 2^{-6} = 0.015625 \in [ r_-, r_+].
\end{equation}
%
As explained in \Cref{rem:rpa}, Point 3, the matrix $A$ in~\eqref{logistic:A} is necessarily injective since $Z_1 < 1$.
Hence, due to~\Cref{thm:contraction}, a true period-2 orbit $x^\star$ exists within a distance (in $1$-norm) $r_*$ of $\bar{x}$.

It is expected that the a posteriori error on $x^\star$, given by $r_*$ in \eqref{per2:error}, is of order $10^{-2}$, since we chose $\bar{x}$ to be accurate up to $2$ decimals.
We obtain a better error by reproducing the proof with $\bar{x}=(\bar{x}_1,\bar{x}_2) \approx (0.513044509531044, 0.7994554904683129)$, yielding an error $r_*$ of order $10^{-12}$. 

\textbf{Checking periodicity $p = 2$ (rigorous -- interval arithmetic).}
We need to ensure that $x^\star = (x_0^\star, x_1^\star)$ truly is a period-2 orbit, and not of lower order one (in this case, a fixed-point).
This is evident from the values for $\bar{x} = (\bar{x}_0, \bar{x}_1)$ in \eqref{eq:approx} and the error bound $r_*$ reported in~\eqref{per2:error}.

\textbf{Checking the stability (rigorous -- interval arithmetic).}
For $n=1$, the Jacobian in~\eqref{eq:EV} 
corresponds to the unique eigenvalue $\lambda = D f^2_\mu (x_0^\star)$, 
thus we find (using interval arithmetic):

\vspace{-0.42cm}
\begin{minipage}[c]{0.55\linewidth}
\begin{equation}\label{per2:Jac}
    \lambda = f_\mu'(x_1^\star) f_\mu'(x_0^\star) \in [-0.0644712, 0.314457],
\end{equation}
\end{minipage}
\hfill
\begin{minipage}[c]{0.25\linewidth}
\hfill 
\end{minipage}

\,
\begin{minipage}[c]{0.1\linewidth}
\hfill
\end{minipage}
\hfill
\begin{minipage}[c]{0.85\linewidth}
\begin{tcolorbox}[colback=white, colframe=black!40, boxrule=0.5pt, arc=2mm, boxsep=0mm]
\begin{lstlisting}[language=Julia, xleftmargin=0pt, xrightmargin=0pt, aboveskip=0pt, belowskip=0pt]
# Enclosure of the true periodic orbit
x0_star = interval(ix_bar[1] - r_m, ix_bar[1] + r_m)
x1_star = interval(ix_bar[2] - r_m, ix_bar[2] + r_m)
# Compute rigorously the eigenvalue
lambda = Df(x1_star, imu) * Df(x0_star, imu)
\end{lstlisting}
\end{tcolorbox}
\end{minipage}
which implies that the periodic point $x_0^\star$ is stable.

Repeating this argument for different periods, we proved the existence of 8906 unstable orbits and 372 stable orbits of periods $p = 2,\ldots, 80$. 
In particular, we prove the existence of a period-3 to obtain chaos, as in~\cite{li2004period}.
The incidence of periodic orbits, as $\mu$ increases, is consistent with the known cascade of period-doubling bifurcations, see~\Cref{fig:ec_logistica}.
See \cite{githubGitHubLuciaalonsomozoCAPs} for the code.
\begin{figure}[h!]
\centering
\includegraphics[height=1.8in]{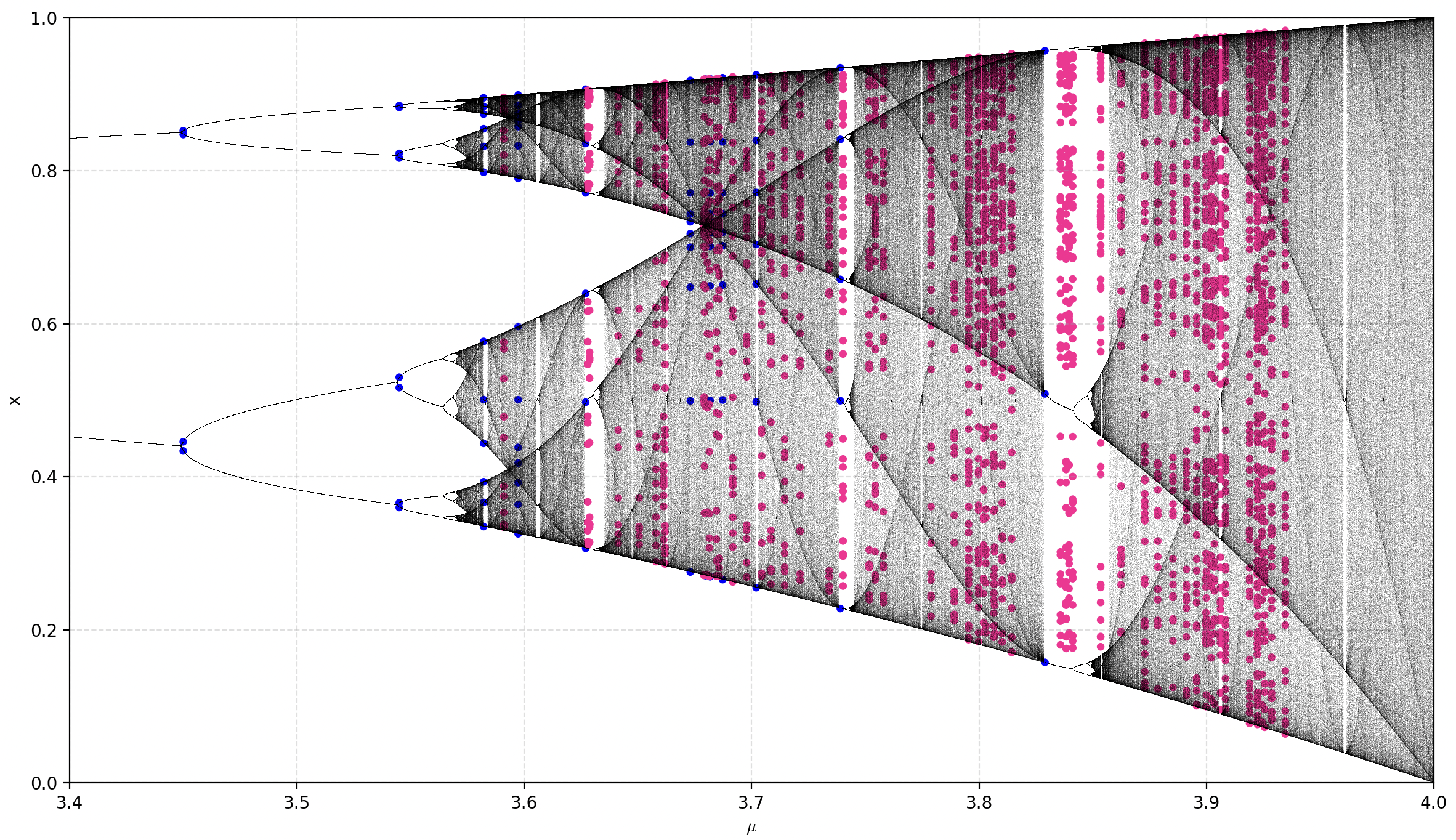}
\includegraphics[height=1.8in]{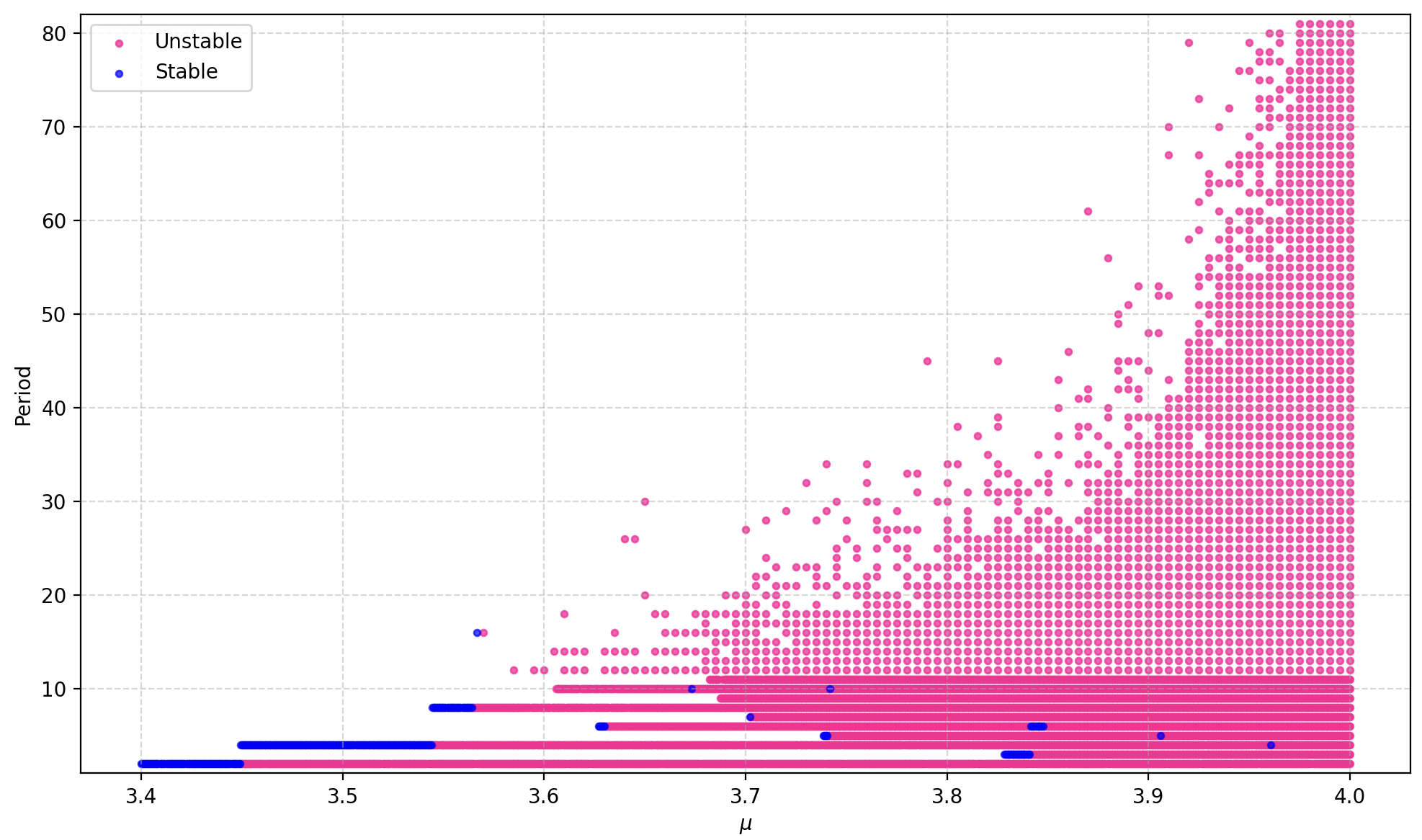}
\caption{\textbf{Left:} The well-known bifurcation diagram for the logistic map containing one period-$p$ orbit for each $p=2,\ldots,80$ described in \cite{githubGitHubLuciaalonsomozoCAPs}.
\textbf{Right:} The number of periodic points (of periods up to $p=80$) increases with $\mu$.
Blue (resp. magenta) denotes stable (resp. unstable) points.
%
}
\label{fig:ec_logistica}
\end{figure}
\subsection{Example: A predator-prey model}

We now turn to a discrete dynamical system that models certain population dynamics, as it is an approximation of 
the Poincar\'e map of an ordinary differential equation of two competing predators for one renewable prey, see~\cite{kryzhevich2021bistability}. 
The map reads 
\begin{equation}\label{eq:sistema_ejemplo1}
f(x) = f(x,\beta,\kappa) \coloneqq \beta + x - \frac{\kappa}{1 + e^x},
\end{equation}
where $x \in \mathbb{R}$ and we restrict ourselves to the parameter region $P \coloneqq \{(\beta, \kappa) \in \mathbb{R}^2 \, : \, \kappa < \beta < 0 \}$.
There is a fixed-point of $f$, given by $x_\textnormal{fp} \coloneqq \ln(\kappa/\beta - 1)$,
%
%
which undergoes a bifurcation, see~\cite[Lemma 1 and 3]{kryzhevich2021bistability}, leading to the appearance of a unique period-2 orbit for any 
\begin{equation}\label{kappa_pd}
    \kappa\leq \kappa_{\textnormal{pd}(2)}(\beta)\coloneqq\frac{\beta^2}{\beta+2} \qquad \text{ and } \qquad \beta<-2 .
\end{equation}
%
%
The authors also hint that there may exist periodic orbits of higher period, 
see~\cite[Lemma 2]{kryzhevich2021bistability}. 
In particular, they illustrate sub-regions in the parameter space associated with the \emph{numerical} existence of stable periodic orbits of higher periods, see~\cite[Figure 4]{kryzhevich2021bistability}.
%

We follow the three steps in the guide of \Cref{sec:cap} to obtain a plethora of periodic orbits for \eqref{eq:sistema_ejemplo1}: 
(1) we formulate the appropriate zero-finding problem $F$, (2) we compute the numerical root $\bar{x}$ of $F$ and inverse $A\approx DF (\bar{x})^{-1}$, and (3) we apply \Cref{thm:contraction}. For the latter, the bounds $Y$ and $Z_1$ are abstractly defined as in~\eqref{logistic:YZ1Z2}; however, the bound of $Z_2$ has to be adapted for the Lipschitz constant of \eqref{eq:sistema_ejemplo1}. Indeed, we consider $Z_2 =  (\sqrt{3} / 18) |\kappa| \, \|A\|_{\mathscr{B}(\mathcal{X})} $
and $R =\infty$, due to
\begin{equation}\label{PP:Z2}
\begin{aligned}
\|A(DF(x) - DF(\bar{x}))\|_{\mathscr{B}(\mathcal{X})} &\le \|A\|_{\mathscr{B}(\mathcal{X})} \|DF(x) - DF(\bar{x})\|_{\mathscr{B}(\mathcal{X})} \\
&\le \|A\|_{\mathscr{B}(\mathcal{X})} \left( \sup_{x\in B_R(\bar{x})} \| D^2 F(x)\|_{\mathscr{B}(\mathcal{X},\mathscr{B}(\mathcal{X}))} \right) \|x - \bar{x}\|_\mathcal{X}  \\
&\le \frac{\sqrt{3} \, |\kappa|}{18} \|A\|_{\mathscr{B}(\mathcal{X})} \|x - \bar{x}\|_\mathcal{X},
\end{aligned}
\end{equation}
where we use the mean value inequality and the fact that $|f''(x)| = |\kappa| \, e^x \, |1 - e^x| /(e^x + 1)^3$ has a maximum value given by $\sqrt{3} |\kappa| / 18$, which occurs at the points \(x = \ln(2 \pm \sqrt{3})\).
We developed an algorithm to rigorously explore the parameter plane in~\cite{githubGitHubLuciaalonsomozoCAPs}. Specifically, several values of \( b \in [-20, -2] \) and \( \kappa \in [-45, -5] \) were considered, with different step sizes, and periodic points of periods ranging from $p=2$ to $p=10$ were tested.
As a result, 820,921 unstable and 143,477 stable periodic orbits were identified and rigorously computed, 
see~\Cref{fig:ec_dp2}.

We confirm the presence of several stable orbits that have been found numerically in~\cite{kryzhevich2021bistability}, see \Cref{fig:ec_dp2} (left); in particular, the suggestion of a period-doubling and saddle-node bifurcations. 
%
%
The two regions of unstable orbits that are observed in  \Cref{fig:ec_dp2} (middle) are symmetric, see in~\cite[Remark 2]{kryzhevich2021bistability}. 
These two regions are separated by an empty band, a zone containing a stable period-2 orbits as the global attractor. 
%
%
The region with smaller $\kappa$ and larger $b$ exhibit rapid growth in the number of unstable orbits with increasing period, which may reflect a route to chaos through a cascade of period-doubling bifurcations, see \Cref{fig:ec_dp2} (right).
\begin{figure}[h!]
\centering
\includegraphics[height=1.85in]{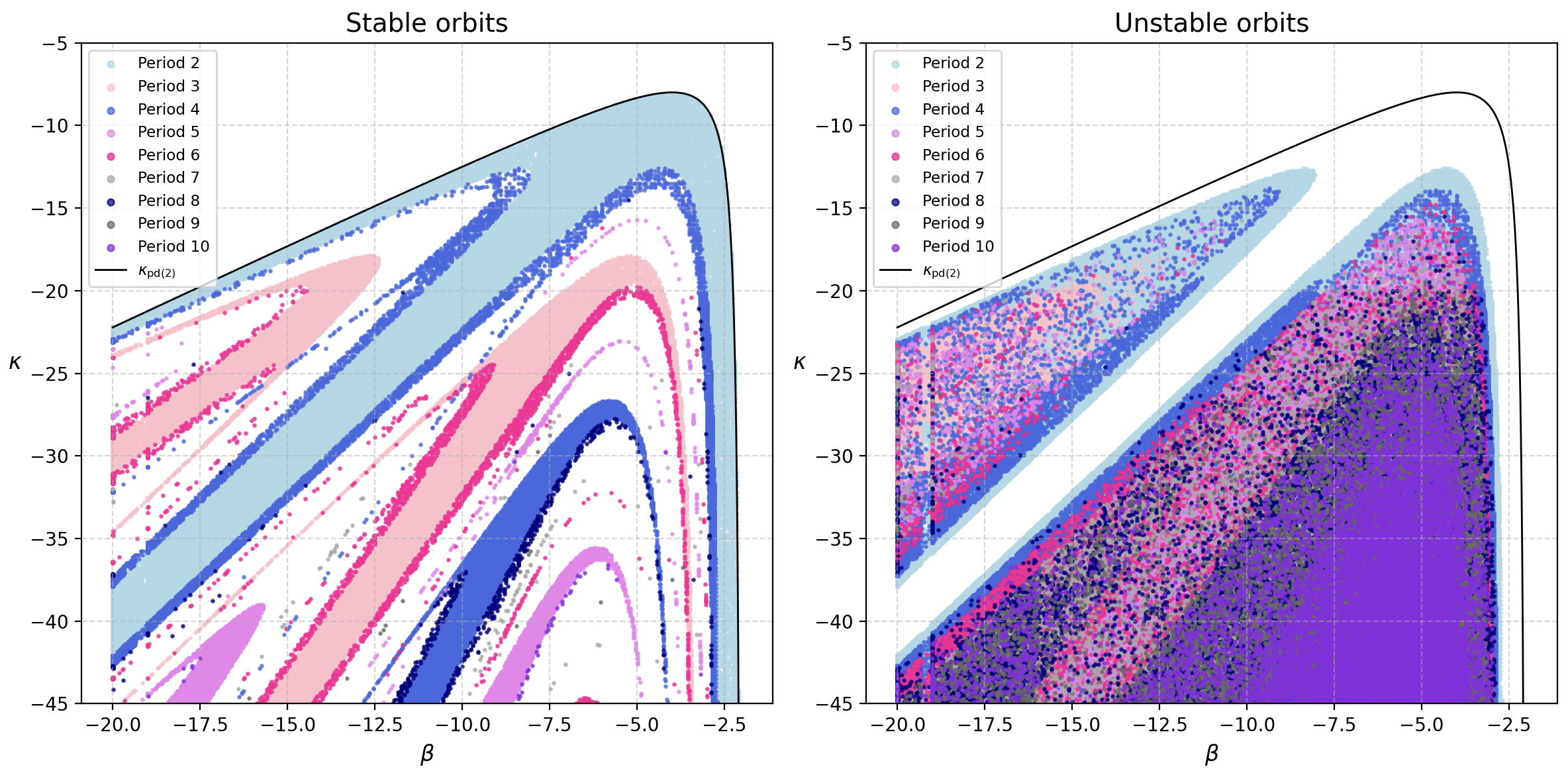}
\includegraphics[height=1.8in]{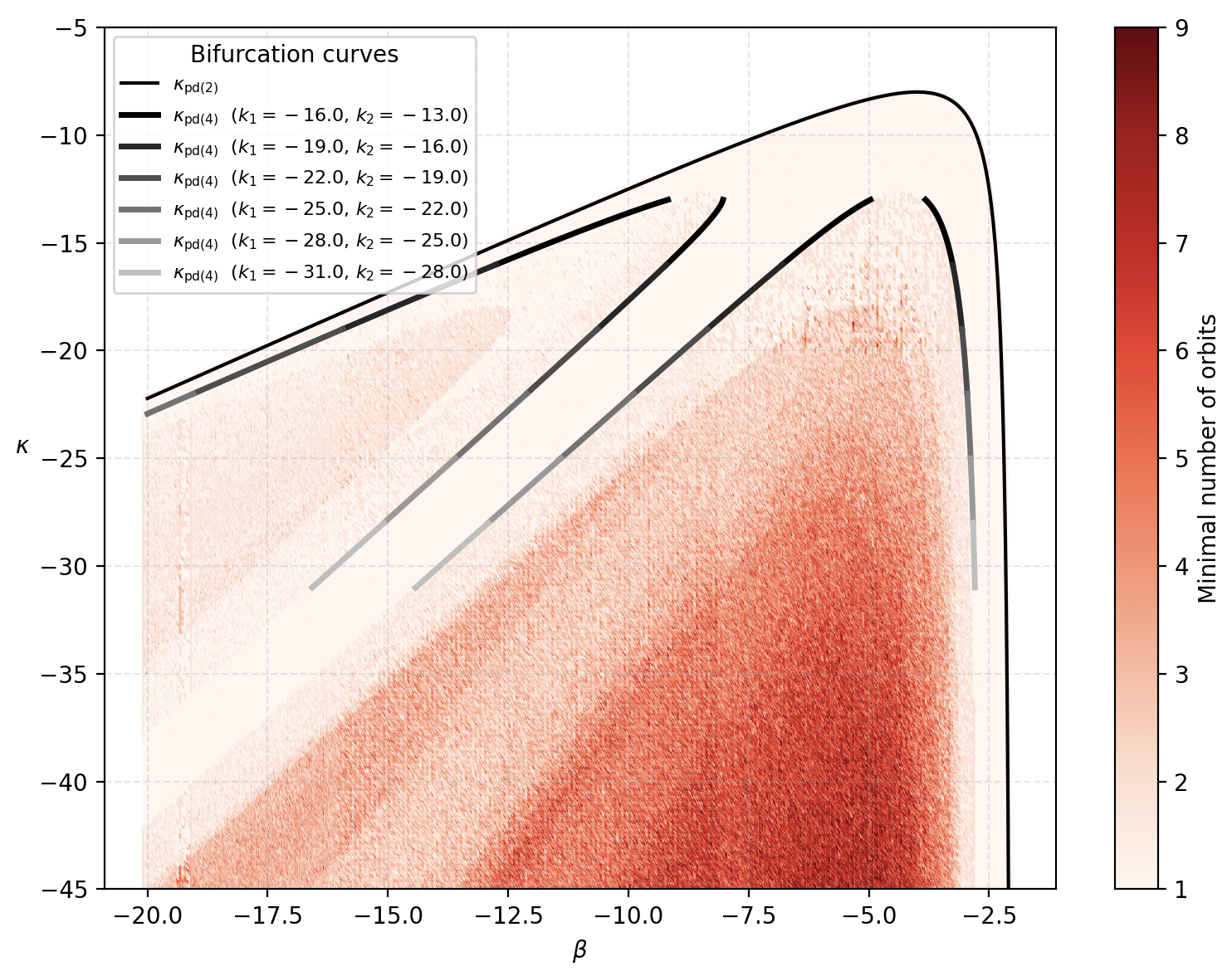}
\caption{\textbf{Left and middle:} Regions of the parameter plane \((\beta, \kappa)\) associated with stable and unstable periodic points. 
Note the curves that separate regions of period $p$ from period $2p$, associated to period-doubling bifurcations; these are \emph{rigorously} constructed in~\Cref{sec:perdob}. These curves form the so-called \emph{shrimp structures}, 
see~\cite{Gallas,Stoop,Glass,kryzhevich2021bistability}.
\textbf{Right:} Heatmap that depicts if there are periodic points of periods up to a given value \(p\) in the parameter plane \((\beta, \kappa)\). Here, 
we do not account for how many orbits of the same period may exist; we simply plot the existence of periodic points for all periods up to a fixed period $p$, which determines the shade of red. 
}
\label{fig:ec_dp2}
\end{figure}

\section{Continuum of period-doubling bifurcations}\label{sec:perdob}

%
%
In this section, we rigorously detect the boundary, in the two-parameter space $(\beta, \kappa)$ of the predator-prey model in \eqref{eq:sistema_ejemplo1}, between regions where period-$2p$ points emerge from period-$p$ points; see~\Cref{fig:ec_dp2}.
Such a phenomenon is called a \emph{period-doubling bifurcation} and a cascade of such events often leads downstream to chaotic dynamics, see~\cite{SanderYorke,devaney2018introduction}.
In our case, given a $p$-periodic point $x_0^\star$ of $f$, the bifurcation occurs when the eigenvalue $D_x f^p(x_0^\star, \beta, \kappa)$ monotonically crosses $-1$ as a parameter varies.
In particular, this means that there exists a nontrivial element $v \in \ker (D_x f^p(x_0^\star, \beta, \kappa) + 1)$; for $n=1$, we can normalize $v=1$, and thus we obtain the following condition:
\begin{equation}\label{eq:eig_prob}
Df^p(x_0^\star, \beta, \kappa) + 1 = 0.
\end{equation}
%
%
%
%
For convenience, we let $\kappa\in [\kappa_1,\kappa_2]$ be the varying parameter, for given $\kappa_1,\kappa_2\in\mathbb{R}$ with $\kappa_1<\kappa_2$; whereas we solve for $\beta^\star (\kappa) $ such that $\lambda(\kappa) = D_x f^p(x_0^\star(\kappa), \beta^*(\kappa), \kappa)$ is an eigenvalue equal to $-1$.
For reasons that will become clear later, we reparametrize the parameter $\kappa\in [\kappa_1,\kappa_2]$ as $\alpha\in [-1, 1]$ according to 
%
%
%
\begin{equation}
\kappa(\alpha) \coloneqq \kappa_1 + \frac{\alpha + 1}{2} (\kappa_2 - \kappa_1), \qquad \text{for all } \alpha \in [-1, 1], \label{kappa_eta}
\end{equation}
and thereby we consider the function with the rescaled parameter $f (x, \beta, \kappa(\alpha))$ from now on.

We now formulate our problem of finding periodic orbits and a continuum of period-doubling bifurcations as a zero-finding problem of the map $\mathcal{F} :\mathcal{W} \times [-1,1] \to \mathcal{W}$ given by
\begin{equation}\label{eq:map_period_doubling}
\mathcal{F} (w, \alpha) \coloneqq
\begin{pmatrix}
F(x, \beta, \alpha) \\
G(x, u, \beta, \alpha)
\end{pmatrix}, \qquad w = (x, u, \beta) \in \mathcal{W} \coloneqq\mathbb{R}^p \times \mathbb{R}^{p-1} \times \mathbb{R},
\end{equation}
where $F$ and $G$, in view of~\eqref{eq:def_F}, \eqref{eq:eig_prob} and \eqref{kappa_eta}, 
are defined as follows:
\begin{equation}\label{def:G}
F(x, \beta, \alpha) \coloneqq
\begin{pmatrix*}
f(x_{p-1},\beta,\kappa(\alpha)) -x_0 \\
f(x_0,\beta,\kappa(\alpha)) - x_1 \\
\vdots \\
f(x_{p-2},\beta,\kappa(\alpha)) - x_{p-1}
\end{pmatrix*},
G(x, u, \beta, \kappa) \coloneqq
\begin{pmatrix*}
D_x f(x_{p-1},\beta,\kappa(\alpha)) u_{p-2} \cdots u_0 + 1 \\
D_x f(x_0,\beta,\kappa(\alpha)) - u_0 \\
\vdots \\
D_x f(x_{p-2},\beta,\kappa(\alpha)) - u_{p-2}
\end{pmatrix*},
\end{equation}
noticing that we recast the problem in~\eqref{eq:eig_prob} by an equivalent formulation of finding a zero of $G$, by introducing a new variable $u=(u_0,\ldots,u_{p-2})\in \mathbb{R}^{p-1
}$, since $Df^p(x, \beta, \kappa)$ is given by~\eqref{eq:EV}.

The space $\mathcal{W}$ is equipped with the $1$-norm; thus, for any $w \in \mathcal{W}$ and $W \in \mathscr{B}(\mathcal{W})$, we have
\begin{equation}\label{eq:norm_punctual}
\| w \|_\mathcal{W} \coloneqq \sum_{i = 1}^{2p} | w_i | \qquad \text{ and } \qquad
\| W \|_{\mathscr{B}(\mathcal{W})}\coloneqq \max_{1 \le j \le 2p} \sum_{i = 1}^{2p} | W_{i,j} |.
\end{equation}
%
%
For any $\alpha\in [-1,1]$, consider $w^\star (\alpha) = (x^\star (\alpha), u^\star (\alpha), \beta^\star (\alpha))\in \mathcal{W}$ such that $\mathcal{F} (w^\star (\alpha), \alpha) = 0$.
Thus, each of the two coordinates of $\mathcal{F}$ yields:
(1) $x^\star_0(\kappa)$ is a $p$-periodic point of the map $f$ in~\eqref{eq:sistema_ejemplo1} for $(\beta^\star(\kappa),\kappa)$ with $\kappa\in[\kappa_1,\kappa_2]$, and
(2) its eigenvalue is $\lambda (\kappa) = D_x f^p(x^\star_0 (\kappa), \beta^\star (\kappa), \kappa)= -1$.
Note that this provides a \emph{candidate} for a period-doubling bifurcation, as we do not prove the non-degeneracy condition that guarantees that the eigenvalue crosses -1 monotonically. 

After phrasing the search of period-doubling bifurcations as a zero-finding problem of $\mathcal{F}$, we use a similar strategy of \Cref{thm:contraction} previously done in \Cref{sec:cap} for fixed values of $\alpha$.
Indeed, we numerically compute functions $\alpha \mapsto \bar{w}(\alpha)$ and $\alpha \mapsto \mathcal{A}(\alpha)$, and we construct similar bounds as $Y, Z_1, Z_2$ in~\eqref{eq:bounds}; except now they must be uniform in the $C^0$-norm over $\alpha \in [-1,1]$.
By the uniform contraction mapping theorem, this implies the existence of a $C^0$ function $\alpha \mapsto w^\star(\alpha)$ such that $\mathcal{F}(w(\alpha), \alpha) = 0$ for all $\alpha \in [-1, 1]$.
We state below a uniform contraction version of \Cref{thm:contraction}, which gives us both the existence of the family of solution and an a posteriori error in $C^0$-norm between $\alpha \mapsto \bar{w}(\alpha)$ and $\alpha \mapsto w^\star(\alpha)$.
\begin{theorem}\label{thm:contractionUNIF}
Let $\mathcal{W}$ be a Banach space, $\bar{w} \in C^0([-1,1],\mathcal{W})$, $\mathcal{F} : \mathcal{W} \times [-1,1]\to \mathcal{W}$ a $C^1$ map, 
and $\mathcal{A} \in C^0([-1,1], \mathscr{B}(\mathcal{W}))$ such that $\mathcal{A}(\alpha)$ is injective for all $\alpha \in [-1,1]$.
Fix $R \in [0, \infty]$ and assume the existence of $\mathcal{Y}$, $\mathcal{Z}_1$, and $\mathcal{Z}_2 = \mathcal{Z}_2(R)$ such that
\begin{subequations}\label{eq:bounds_uniform}
\begin{align}
\sup_{\alpha \in [-1,1]} \| \mathcal{A}(\alpha) \mathcal{F}(\bar{w}(\alpha), \alpha) \|_\mathcal{W} &\le \mathcal{Y}, \label{eq:Y_uniform} \\
\sup_{\alpha \in [-1,1]} \| I - \mathcal{A}(\alpha) D_w \mathcal{F}(\bar{w}(\alpha), \alpha) \|_{\mathscr{B}(\mathcal{W})} &\le \mathcal{Z}_1, \label{eq:Z1_uniform} \\
\sup_{\alpha \in [-1,1]} \quad \sup_{\substack{w \in B_R(\bar{w}(\alpha)) \\ w \ne \bar{w}(\alpha)}} \frac{\| \mathcal{A}(\alpha) (D_w \mathcal{F}(w, \alpha) - D_w \mathcal{F}(\bar{w}(\alpha), \alpha)) \|_{\mathscr{B}(\mathcal{W})}}{\| w - \bar{w}(\alpha)\|_\mathcal{W}} &\le \mathcal{Z}_2. \label{eq:Z2_uniform}
\end{align}
\end{subequations}
If there exists $r \in [0, R]$ such that
\begin{equation}\label{eq:condUNIF}
\mathcal{Y} + r (\mathcal{Z}_1 - 1) + \frac{r^2}{2} \mathcal{Z}_2 \le 0
\qquad \text{and} \qquad
\mathcal{Z}_1 + r \mathcal{Z}_2 < 1,
\end{equation}
then there is a unique $w^\star \in C^0([-1,1], B_r (\bar{w}(\alpha)))$ such that $\mathcal{F}(w^\star(\alpha), \alpha) = 0$ for all $\alpha \in [-1,1]$.
\end{theorem}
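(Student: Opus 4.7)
The plan is to reduce this uniform-over-$\alpha$ statement to a single application of \Cref{thm:contraction} in the Banach space $\mathcal{X} \coloneqq C^0([-1,1], \mathcal{W})$ endowed with the sup-norm $\|w\|_\mathcal{X} \coloneqq \sup_{\alpha \in [-1,1]} \|w(\alpha)\|_\mathcal{W}$. Concretely, I would define the lifted maps $\widetilde{F} : \mathcal{X} \to \mathcal{X}$ and $\widetilde{A} : \mathcal{X} \to \mathcal{X}$ pointwise in $\alpha$ by $\widetilde{F}(w)(\alpha) \coloneqq \mathcal{F}(w(\alpha), \alpha)$ and $\widetilde{A}(w)(\alpha) \coloneqq \mathcal{A}(\alpha) \, w(\alpha)$. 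Compactness of $[-1,1]$, $C^1$ regularity of $\mathcal{F}$, and continuity of $\mathcal{A}$ ensure that $\widetilde{F}$ takes values in $\mathcal{X}$, is of class $C^1$ with Fréchet derivative given pointwise by $(D\widetilde{F}(\bar{w}) h)(\alpha) = D_w \mathcal{F}(\bar{w}(\alpha),\alpha)\, h(\alpha)$, and that $\widetilde{A} \in \mathscr{B}(\mathcal{X})$. Moreover, the pointwise injectivity of $\mathcal{A}(\alpha)$ transfers immediately to $\widetilde{A}$: if $\widetilde{A}(w) = 0$ then $\mathcal{A}(\alpha) w(\alpha) = 0$ for every $\alpha$, hence $w \equiv 0$.

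Next, I would translate each of the three estimates~\eqref{eq:bounds_uniform} into the corresponding bounds~\eqref{eq:bounds} of \Cref{thm:contraction} with constants $Y = \mathcal{Y}$, $Z_1 = \mathcal{Z}_1$, $Z_2 = \mathcal{Z}_2$. Each translation is a one-line observation once one notes that $\|w - \bar{w}\|_\mathcal{X} \le R$ is equivalent to $w(\alpha) \in B_R(\bar{w}(\alpha))$ for every $\alpha$, so that taking the $\alpha$-supremum in the pointwise inequalities produces the $\mathcal{X}$-operator-norm estimate; for instance,
\begin{equation}
\| \widetilde{A} \, (D\widetilde{F}(w) - D\widetilde{F}(\bar{w})) \|_{\mathscr{B}(\mathcal{X})} \le \sup_{\alpha} \mathcal{Z}_2 \, \|w(\alpha) - \bar{w}(\alpha)\|_\mathcal{W} = \mathcal{Z}_2 \, \|w - \bar{w}\|_\mathcal{X}.
\end{equation}
The inequalities~\eqref{eq:condUNIF} are then verbatim~\eqref{eq:cond}, so \Cref{thm:contraction} provides a unique $w^\star \in B_r(\bar{w}) \subset \mathcal{X}$ with $\widetilde{F}(w^\star) = 0$, which is exactly $\mathcal{F}(w^\star(\alpha),\alpha) = 0$ for all $\alpha \in [-1,1]$; continuity of $w^\star$ and the bound $\|w^\star(\alpha) - \bar{w}(\alpha)\|_\mathcal{W} \le r$ for every $\alpha$ then yield the stated conclusion.

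The main obstacle, and really the only step of substance, is verifying that $\widetilde{F}$ is $C^1$ on $\mathcal{X}$ with the claimed pointwise Fréchet derivative. This is a Nemytskii-type lemma: one combines the uniform continuity of $(w,\alpha) \mapsto D_w \mathcal{F}(w,\alpha)$ on the compact tube $\{(w,\alpha) : \alpha \in [-1,1],\ \|w - \bar{w}(\alpha)\|_\mathcal{W} \le R\}$ with a finite cover of $[-1,1]$ to control the remainder uniformly in $\alpha$. Apart from this technicality, the uniqueness clause requires a small additional remark: \Cref{thm:contraction} delivers uniqueness in $B_r(\bar{w}) \subset \mathcal{X}$, and one must note that any other continuous solution curve in $\prod_\alpha B_r(\bar{w}(\alpha))$ is, by construction, an element of this same ball in $\mathcal{X}$, so the pointwise-in-$\alpha$ uniqueness statement follows.
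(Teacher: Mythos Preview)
The paper does not actually prove \Cref{thm:contractionUNIF}: it is stated without proof, introduced merely as ``a uniform contraction version of \Cref{thm:contraction}''. So there is no paper argument to compare against, and your proposal supplies what the paper omits. Your reduction---lift to $\mathcal{X} = C^0([-1,1],\mathcal{W})$ with the sup-norm and apply \Cref{thm:contraction} once to the Nemytskii operator $\widetilde{F}$ and the multiplication operator $\widetilde{A}$---is the natural and correct route, and the translation of the bounds~\eqref{eq:bounds_uniform} into~\eqref{eq:bounds} is exactly as you describe.

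One point deserves a small correction. You justify the $C^1$ regularity of $\widetilde{F}$ by invoking ``uniform continuity of $(w,\alpha)\mapsto D_w\mathcal{F}(w,\alpha)$ on the compact tube $\{(w,\alpha):\alpha\in[-1,1],\ \|w-\bar{w}(\alpha)\|_\mathcal{W}\le R\}$''. When $\mathcal{W}$ is infinite-dimensional (as the theorem allows), this tube is \emph{not} compact, so the argument as written fails. The fix is straightforward: use instead the compactness of the \emph{graph} $K=\{(\bar{w}(\alpha),\alpha):\alpha\in[-1,1]\}\subset\mathcal{W}\times[-1,1]$, which is the continuous image of the compact interval. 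Continuity of $D_w\mathcal{F}$ then yields, via a finite-cover/Lebesgue-number argument, a $\delta>0$ such that $D_w\mathcal{F}$ varies by less than $\epsilon$ on the $\delta$-neighbourhood of $K$; this is precisely the uniform-in-$\alpha$ control needed both for Fr\'echet differentiability of $\widetilde{F}$ at $\bar{w}$ and for continuity of $D\widetilde{F}$. With that adjustment your proof is complete; the uniqueness remark at the end is also correct, since $w\in C^0([-1,1],\mathcal{W})$ with $w(\alpha)\in B_r(\bar{w}(\alpha))$ for all $\alpha$ is exactly the condition $w\in B_r(\bar{w})\subset\mathcal{X}$.
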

To apply \Cref{thm:contractionUNIF}, we need to construct the numerical approximations $\alpha \mapsto \bar{w}(\alpha)$ and $\alpha \mapsto \mathcal{A}(\alpha)$, for $\alpha\in [-1,1]$, of the true functions $\alpha \mapsto w^\star(\alpha)$ and $\alpha \mapsto D_w \mathcal{F}(w^\star(\alpha), \alpha)^{-1}$, respectively.
An efficient and well-known numerical method is to use a Chebyshev interpolation~\cite{Trefethen}.
%
Specifically, both $\bar{w}$ and $\mathcal{A}$ are sought as a finite sum of Chebyshev polynomials of the first kind, $T_k(\alpha)$ for each $k\in\mathbb{N}_0$, which are given by the recurrence relation, for all $\alpha \in [-1, 1]$,
\begin{equation}\label{Chebyshev}
T_0(\alpha) = 1, \qquad T_1 (\alpha) = \alpha, \qquad T_k(\alpha) = 2 \alpha T_{k-1}(\alpha) - T_{k-2}(\alpha), \quad k \ge 2.
\end{equation}
%
%
%
In practice, given an order $K \ge 0$, at each fixed Chebyshev node $\alpha^{[k]}\coloneqq -\cos \left(k\pi / K\right)$ with $k\in \{0,\ldots, K\}$, we solve numerically 
\begin{equation}\label{perdob:A}
    \mathcal{F}(\bar{w}^{[k]}, \alpha^{[k]}) \approx 0 \qquad \text{ and } \qquad \mathcal{A}^{[k]} \approx D_w \mathcal{F}(\bar{w}^{[k]}, \alpha^{[k]}) ^{-1}
\end{equation}
for the unknowns $\bar{w}^{[k]}$. 
Then, we efficiently retrieve numerical interpolations for $\alpha \mapsto \bar{w}(\alpha)$ and $\alpha \mapsto \mathcal{A}(\alpha)$ 
using the fast Fourier transform, see~\cite{Trefethen}.

For now we postpone the discussion on estimating the uniform bounds in~\eqref{eq:bounds_uniform} to Step 3 of \Cref{sec:steps} and \Cref{sec:boundsPP}.

\subsection{From period-$2$ to period-$4$}
\label{sec:steps}

\textbf{Step 1 (reformulate the problem).}
We define $\mathcal{F} : \mathcal{W} \times [-1,1]\to \mathcal{W}$, where $\mathcal{W}\coloneqq\mathbb{R}^{2}\times\mathbb{R}\times\mathbb{R}$, as the particular case of \eqref{eq:map_period_doubling} with $p=2$, where a zero of $\mathcal{F}$ corresponds to a continuum of period-2 orbits in which its eigenvalue attains value -1. This yields the map given by~\eqref{eq:map_period_doubling}, with explicit coordinate $F$ and $G$ in~\eqref{def:G}: 
\begin{equation}
F(w, \alpha) =
\begin{pmatrix}
    f (x_1,\beta,\kappa(\alpha)) - x_0 \\
    f (x_0,\beta,\kappa(\alpha)) - x_1 \\
\end{pmatrix}, \qquad 
G(x, u, \beta, \alpha) =
\begin{pmatrix}
    D_x f(x_1, \beta, \kappa(\alpha)) u + 1 \\
    D_x f(x_0, \beta, \kappa(\alpha)) - u
\end{pmatrix}. 
\end{equation}
where $w = (x_0,x_1, u, \beta) \in \mathcal{W} \coloneqq\mathbb{R}^2 \times \mathbb{R}^{1} \times \mathbb{R}$.

\textbf{Step 2 (numerical approximation).}
There are two quantities that are computed numerically: a family $\alpha \mapsto \bar{w}(\alpha)$ such that $\mathcal{F}(\bar{w}(\alpha),\alpha)\approx 0$ and a 
family of Jacobian matrices given by $\alpha \mapsto \mathcal{A}(\alpha) \approx D_w \mathcal{F}(\bar{w}(\alpha), \alpha)^{-1}$.
As mentioned in~\eqref{perdob:A}, we use a Chebyshev interpolation~\cite{Trefethen}: first, for each $k\in \{0,\ldots, K\}$ with $K=2^4$, we find $ \mathcal{F}(\bar{w}^{[k]}, \alpha^{[k]}) \approx 0$ and $\mathcal{A}^{[k]} \approx D_w \mathcal{F}(\bar{w}^{[k]}, \alpha^{[k]}) ^{-1}$, and then we interpolate between these $K=2^4$ nodes, using the fast cosine transform.
Note that the families $\bar{w}(\alpha)$ and $\mathcal{A}(\alpha) $ consist, respectively of a vector and a matrix whose entries are finite sum of Chebyshev polynomials of order $K=2^4$.

\textbf{Step 3 (rigorous -- interval arithmetic).}
We verify the hypothesis of \Cref{thm:contractionUNIF}, where we recall that $\mathcal{W}\coloneqq\mathbb{R}^2\times \mathbb{R}\times \mathbb{R}$ equipped with the $1$-norm described in \eqref{eq:norm_punctual}.
Since the nonlinearity~\eqref{eq:sistema_ejemplo1} is non-polynomial, we need extra care in order to obtain the bounds $\mathcal{Y},\mathcal{Z}_1,\mathcal{Z}_2$ in~\eqref{eq:bounds_uniform} of \Cref{thm:contractionUNIF}.
For our purposes, we use Taylor's theorem to split the nonlinearity into a polynomial term, for which it is easier to directly manipulate Chebyshev polynomials, 
and a remainder term, which we will control directly in $C^0$-norm.

Since a periodic orbit $x^\star = (x_0^\star, x_1^\star)$ has two components which may be far in values from one another, it is advantageous to consider two different Taylor expansions of the nonlinearity nearby different $\chi_0$ and $\chi_1$, e.g. the respective mean value of $\bar{x}_0^{[k]}$ and $\bar{x}_1^{[k]}$ for all $k=0,\ldots,K$:
\begin{equation}
\chi_0 \coloneqq \frac{1}{K+1} \sum_{k = 0}^K \bar{x}_0^{[k]}, \qquad \text{ and } \qquad \chi_1 \coloneqq \frac{1}{K+1} \sum_{k = 0}^K \bar{x}_1^{[k]}.
\end{equation}
Recall that the nonlinearity in~\eqref{eq:sistema_ejemplo1} is given by the reflected sigmoid function, $h(x) \coloneqq - \frac{1}{1 + e^x}$
%
%
%
%
\begin{equation}
h(x) \coloneqq - \frac{1}{1 + e^x}.
\end{equation}
Then, for each $i = 0, 1$, we expand $h(x)$ in Taylor series nearby each $\chi_i$: 
\begin{subequations}
\begin{align}
h(x) &= h_{i, \textnormal{poly}} (x) + \tilde{h}_i(x),\\
h_{i,\textnormal{poly}}(x) &\coloneqq \sum_{n = 0}^N \frac{h^{(n)}(\chi_i)}{n!} (x - \chi_i)^n, \\
\tilde{h}_i (x) &\coloneqq \int_0^1 \frac{(1 - \tau)^N}{N!} h^{(N+1)} (\chi_i + \tau (x - \chi_i)) (x - \chi_i)^{N+1} \, \mathrm{d} \tau, \label{eq:rho}
\end{align}
\end{subequations}
where $h_{i, \textnormal{poly}}(x)$ is the polynomial term, which is truncated after order $N$, and $\tilde{h}_i(x)$ is the remainder; here, we denote $h^{(n)}(x)\coloneqq\frac{\mathrm{d}^n}{\mathrm{d}x^n} h(x)$.

Now, let us define a truncation of our original problem in~\eqref{eq:map_period_doubling} regarding only the polynomial (up to order $N$) term:
\begin{equation}\label{eq:G_poly}
\mathcal{F}_\textnormal{poly}(w, \alpha) \coloneqq
\begin{pmatrix}
F_\textnormal{poly}(x, \beta, \alpha) \\
G_\textnormal{poly}(x, u, \beta, \alpha)
\end{pmatrix}, \qquad w = (x, u, \beta) \in \mathbb{R}^2 \times \mathbb{R}^1 \times \mathbb{R},
\end{equation}
where we use the Taylor expansion of $h$ nearby each $\chi_i$, for $i=1,2$, accordingly:
\begin{subequations}
\begin{align}
F_\textnormal{poly} (x, \beta, \alpha) &\coloneqq 
\begin{pmatrix}
f_{1,\textnormal{poly}}(x_1, \beta, \kappa(\alpha)) - x_0 \\
f_{0,\textnormal{poly}}(x_0, \beta, \kappa(\alpha)) - x_1
\end{pmatrix}, \\
G_\textnormal{poly} (x, u, \beta, \alpha) &\coloneqq 
\begin{pmatrix} 
D_x f_{1,\textnormal{poly}} (x_1, \beta, \kappa(\alpha)) u + 1 \\ D_x f_{0,\textnormal{poly}} (x_0, \beta, \kappa(\alpha)) - u 
\end{pmatrix}.\\
f_{i,\textnormal{poly}}(x, \beta, \alpha) &\coloneqq f(x, \beta, \kappa(\alpha)) - \kappa(\alpha) \tilde{h}_i (x) = \beta + x + \kappa(\alpha) h_{i,\textnormal{poly}} (x), \qquad i = 0, 1,
\end{align}    
\end{subequations}
%
%
Thus, for $w = (x, u, \beta) \in \mathbb{R}^2 \times \mathbb{R} \times \mathbb{R}$ and $\alpha \in [-1,1]$, the remainder of $\mathcal{F}$ is
\begin{equation}\label{eq:G_diff}
\tilde{\mathcal{F}}(w, \alpha) 
\coloneqq \mathcal{F}(w, \alpha) - \mathcal{F}_\textnormal{poly}(w, \alpha)
= \begin{pmatrix*}[l]
\kappa(\alpha) \tilde{h}_1 (x_1) \\
\kappa(\alpha) \tilde{h}_0 (x_0) \\
\kappa(\alpha) \tilde{h}_1^{(1)}(x_1) u \\
\kappa(\alpha) \tilde{h}_0^{(1)}(x_0)
\end{pmatrix*}.
\end{equation}
We have resorted to a Taylor expansion of $h$ for the sake of avoiding too much technicalities, but this is by no means the only way to deal with nonlinear terms.
Indeed, nonlinearities can be handled via automatic differentiation~\cite{Olivier}, or by exploiting the Poisson summation formula \cite[Section 3]{Lindsey}; see also~\cite{church2025globalcont} for another approach dealing with rational nonlinear functions. 
These aforementioned methods provide a much better control on the nonlinearities in contrast to the present uniform $C^0$-norm bound on the Taylor remainder. As a matter of fact, such techniques allow us to set up the zero-finding problem $\mathcal{F}$ as a map defined on the space of sequences of Chebyshev coefficients, $\ell^1$, for which one could apply \Cref{thm:contraction} on the Banach space $\mathcal{X} = \ell^1$;
see~\cite{BredenHenot} for details.

In the next~\Cref{sec:boundsPP}, we compute the bounds in~\eqref{eq:bounds_uniform} and establish the existence of an $r>0$ satisfying~\eqref{eq:condUNIF}, thereby verifying~\Cref{thm:contractionUNIF} for parameters between $\kappa_1=-16$ and $\kappa_2=-13$. We use $K=2^4$ Chebyshev nodes and a Taylor expansion of $h$ of order $N=10$. 
This yields four curves that are period-doubling bifurcations candidates (as non-degeneracy has not been rigorously verified), each with an error of order at most $10^{-4}$.
We then extend these curves by repeating this procedure for $j=1,\ldots,5$, covering the ranges between $\kappa_1=-16-3j$ and $\kappa_2=-13-3j$. 
The resulting curves for $\kappa\in [-31,-13]$ are shown in~\Cref{fig:ec_dp2} (right). 
We chose to patch several parameter windows for efficiency and controlled accuracy, whereas larger parameter intervals could be handled at once with larger $N$ and $K$. 
Note that to glue two consecutive curves, a ball of uniqueness of one endpoint must be within the uniqueness ball of the other, see~\cite{JBLessardMisch10}.
%
Furthermore, numerical evidence indicates that a pair of curves meet at a fold and thus there are only two distinct curves. 
At the fold, the implicit function theorem fails and \Cref{thm:contractionUNIF} can not be applied to our map $\mathcal{F}$. However, the full curve (including the fold) could be obtained via (pseudo-)arclength continuation, which is not pursued here; see \cite{JBLessardMisch10,BredenHenot}.

\subsection{Details for the bounds}\label{sec:boundsPP}

From Step 2, we have obtained $\bar{w}(\alpha)$ and $\mathcal{A}(\alpha)$, for all $\alpha\in [-1,1]$, in terms of Chebyshev polynomials of order $K=2^4$.
To tackle Step 3, we need to manipulate these Chebyshev approximations and estimate the bounds $\mathcal{Y}, \mathcal{Z}_1, \mathcal{Z}_2$ in~\eqref{eq:bounds_uniform}.
%
%
To this end, we represent Chebyshev series by their coefficient sequence, which we view as elements of the Banach space:
\begin{equation}
\ell^1 \coloneqq \left\{
\psi = (\psi_0, \psi_1, \dots) \in \mathbb{R}^{\mathbb{N}_0} \, : \, \| \psi \|_{\ell^1} \coloneqq \sum_{k \in \mathbb{Z}} | \psi_{|k|} | < \infty
\right\}.
\end{equation}
This space is naturally equipped with a multiplication operation $*$ inherited from the product of Chebyshev polynomials, that is, for all $\psi, \phi \in \ell^1$,
\begin{equation}
\psi(\alpha) \phi(\alpha) = \sum_{k \in \mathbb{N}_0} (\psi * \phi)_{k} \, T_k(\alpha), \qquad \text{with} \qquad (\psi * \phi)_k \coloneqq \sum_{j \in \mathbb{Z}} \psi_{|k - j|} \, \phi_{|j|}.
\end{equation}
%
Note that $\ell^1$ is a commutative ring with the addition $+$ and multiplication $*$, and a Banach algebra
\begin{equation}\label{eq:banach_algebra}
\|\psi * \phi \|_{\ell^1} \le \| \psi \|_{\ell^1} \| \phi \|_{\ell^1}, \qquad \text{for all }\psi,\phi\in \ell^1,
\end{equation}
%

%
Recall that any Lipschitz function, i.e. $\psi \in \mathrm{Lip}([-1,1],\mathbb{R})$, 
admits a converging Chebyshev series, see~\cite{Trefethen}. Thus, such a function $\psi$ can be identified with its sequence of Chebyshev coefficients $\psi=(\psi_0,\psi_1,\ldots)\in \ell^1$. 
%
Then, we can control its $C^0$-norm by its $\ell^1$-norm via
\begin{equation}\label{eq:c0_bound}
\sup_{\alpha \in [-1,1]} | \psi(\alpha) | \le \| \psi \|_{\ell^1}.
\end{equation}
%
%
%
Similarly, for vector-valued functions, we identify the space $\mathrm{Lip}([-1,1], \mathcal{W})$, where $\mathcal{W}\coloneqq\mathbb{R}^{p}\times \mathbb{R}^{p-1}\times \mathbb{R}$, with the space $\mathcal{W}(\ell^1)\coloneqq (\ell^1)^{p} \times (\ell^1)^{p-1} \times \ell^1$, so that each coordinate of a Lipschitz function is represented by a Chebyshev series with coefficients in $\ell^1$,
%
endowed with the norm
\begin{equation}\label{eq:norm_uniform}
\| w \|_{\mathcal{W}(\ell^1)} \coloneqq \sum_{i = 1}^{2p} \| w_i \|_{\ell^1}, \qquad \text{for all } w \in \mathcal{W}(\ell^1).
\end{equation}
Thus the analogue to inequality~\eqref{eq:c0_bound} reads
\begin{equation}\label{eq:c0_bound_n1}
\sup_{\alpha \in [-1,1]} \| w(\alpha) \|_\mathcal{W} \le \| w \|_{\mathcal{W}(\ell^1)}.
\end{equation}
The inequality~\eqref{eq:c0_bound_n1} allows us to obtain the bound~~\eqref{eq:Z1_uniform}.
Next, to obtain the bounds in~\eqref{eq:Z1_uniform} and \eqref{eq:Z2_uniform}, we must bound the operator norms a function of operators $\alpha \mapsto W(\alpha) \in \mathscr{B}(\mathcal{W})$ uniformly in $\alpha$.
Upon the identification of $\mathrm{Lip}([-1,1], \mathcal{W})$ with $\mathcal{W}(\ell^1)$, a special type of operators is that of those acting as multiplication operators on $\mathcal{W}(\ell^1)$.
More precisely, for $\psi \in \ell^1$, we denote its multiplication operator by $\mathcal{M}_\psi : \phi \mapsto \psi * \phi$, and any operator $W \in \mathscr{B}(\mathcal{W}(\ell^1))$ acting as a multiplication operator on $\mathcal{W}(\ell^1)$ can be viewed as a matrix $W = (\mathcal{M}_{\omega_{i,j}})_{1 \le i,j \le 2p}$.
It follows that
\begin{equation}\label{eq:norm_uniform2}
\| W \|_{\mathscr{B}(\mathcal{W}(\ell^1))} \leq \max_{1 \le j \le 2p} \sum_{i = 1}^{2p} \| \mathcal{M}_{\omega_{i,j}} \|_{\mathscr{B}(\ell^1)} =
\max_{1 \le j \le 2p} \sum_{i = 1}^{2p} \| \omega_{i,j} \|_{\ell^1},
\end{equation}
and, in particular, we have
\begin{equation}\label{eq:c0_bound_n2}
\sup_{\alpha \in [-1,1]} \| W(\alpha) \|_{\mathscr{B}(\mathcal{W})} \le \| W \|_{\mathscr{B}(\mathcal{W}(\ell^1))}.
\end{equation}
The inequalities \eqref{eq:c0_bound_n2} and \eqref{eq:norm_uniform2} are applicable in our context, as $\mathcal{A}$ and $D_w \mathcal{F}$ are multiplication operators.

\paragraph{\boxed{\text{$\mathcal{Y}$ bound}}}
The triangle inequality implies
\begin{equation}\label{YboundPP}
\sup_{\alpha \in [-1,1]} \| \mathcal{A}(\alpha) \mathcal{F}(\bar{w}(\alpha), \alpha) \|_\mathcal{W} \le  \| \mathcal{A} (\cdot)\mathcal{F}_\textnormal{poly}(\bar{w}(\cdot), \cdot) \|_{\mathcal{W}(\ell^1)} + \| \mathcal{A} (\cdot)\|_{\mathscr{B}(\mathcal{W}(\ell^1))} \sup_{\alpha \in [-1,1]} \| \tilde{\mathcal{F}}(\bar{w}(\alpha), \alpha) \|_\mathcal{W},
\end{equation}
where we used \eqref{eq:c0_bound}.
The first term, which consists of a polynomial nonlinearity, can be rigorously computed using interval arithmetic and \eqref{eq:norm_uniform}.
The second term, which consists of a non-polynomial nonlinearity, can be bounded due to~\eqref{eq:G_diff} as:
\begin{align}
\sup_{\alpha \in [-1,1]} &\|\tilde{\mathcal{F}}(\bar{w}(\alpha), \alpha) \|_\mathcal{W} \nonumber \\
&\le \sup_{\alpha \in [-1,1]} \Big( |\kappa(\alpha) \tilde{h}_1(\bar{x}_1(\alpha))| + |\kappa(\alpha) \tilde{h}_0(\bar{x}_0(\alpha))| + |\kappa(\alpha)  \bar{u}(\alpha) \tilde{h}_1^{(1)}(\bar{x}_1(\alpha)) | + |\kappa(\alpha) \tilde{h}_0^{(1)}(\bar{x}_0(\alpha)) | \Big)\nonumber\\
&\le \|\kappa\|_{\ell^1} \sup_{\alpha \in [-1,1]} \Big(|\tilde{h}_1(\bar{x}_1(\alpha))| + |\tilde{h}_0(\bar{x}_0(\alpha))| + \|\bar{u}\|_{\ell^1} | \tilde{h}_1^{(1)}(\bar{x}_1(\alpha)) | + | \tilde{h}_0^{(1)}(\bar{x}_0(\alpha)) | \Big),
\end{align}
where $\|\kappa\|_{\ell^1}=|\kappa_2+\kappa_1|/2 + |\kappa_2-\kappa_1|/2$.
Moreover, the remainder terms are bounded by
\begin{subequations}\label{Y:cont}
\begin{align}
\sup_{\alpha \in [-1,1]}|\tilde{h}_i (\bar{x}_i(\alpha))|
&\le \frac{\|(\bar{x}_i(\alpha) - \chi_i)^{N+1}\|_{\ell^1}}{(N+1)!} \sup_{\alpha \in [-1,1]} \sup_{\tau \in [0, 1]} | h^{(N+1)} (\chi_i + \tau (\bar{x}_i(\alpha) - \chi_i)) |, \\
\sup_{\alpha \in [-1,1]} |\tilde{h}_i^{(1)} (\bar{x}_i(\alpha))| 
&\le \frac{\|(\bar{x}_i(\alpha) - \chi_i)^N\|_{\ell^1}}{N!} \sup_{\alpha \in [-1,1]} \sup_{\tau \in [0, 1]} | h^{(N+1)} (\chi_i + \tau (\bar{x}_i(\alpha) - \chi_i)) |,
\end{align}
\end{subequations}
%
%
due to~\eqref{eq:rho}.
Hence, all elements in~\eqref{YboundPP} can be computed using interval arithmetic, where for the case $N=10$, we can use the fact that
\begin{equation}\label{der11}
\sup_{x \in \mathbb{R}} |h^{(11)}(x)| = \frac{691}{8}.
\end{equation}

\paragraph{\boxed{\text{$\mathcal{Z}_1$ bound}}}
Once again, 
the triangle inequality implies that
\begin{align}
\sup_{\alpha \in [-1,1]}\| I - \mathcal{A}(\alpha) D_w \mathcal{F}(\bar{w}(\alpha), \alpha) \|_{\mathscr{B}(\mathcal{W})} \nonumber \le 
& \| I - \mathcal{A} (\cdot)D_w \mathcal{F}_\textnormal{poly}(\bar{w}(\cdot), \cdot) \|_{\mathscr{B}(\mathcal{W}(\ell^1))} \\
&+ \| \mathcal{A}(\cdot) \|_{\mathscr{B}(\mathcal{W}(\ell^1))} \sup_{\alpha \in [-1,1]} \| D_w \tilde{\mathcal{F}}(\bar{w}(\alpha), \alpha) \|_{\mathscr{B}(\mathcal{W})},
\end{align}
where we used \eqref{eq:c0_bound_n2}.
The first term, involving only polynomial nonlinearities, is computed via \eqref{eq:norm_uniform2}.
Note that the Jacobian of $\tilde{\mathcal{F}}$ in~\eqref{eq:G_diff}
\begin{align}
D_w \tilde{\mathcal{F}}(w, \alpha) 
=
\begin{pmatrix*}
0 & \kappa(\alpha) \tilde{h}_1^{(1)}(x_1) & 0 & 0 \\
\kappa(\alpha) \tilde{h}_0^{(1)}(x_0) & 0 & 0 & 0\\
0 & \kappa(\alpha) \tilde{h}_1^{(2)}(x_1)u & \alpha(\alpha) \tilde{h}_1^{(1)}(x_1) & 0 \\
\kappa(\alpha) \tilde{h}_0^{(2)}(x_0) & 0 & 0 & 0
\end{pmatrix*}.\label{eq:Ftail_diff}
\end{align}

Due to~\eqref{eq:Ftail_diff}, the operator norm of the remainder is
\begin{align}
\sup_{\alpha \in [-1,1]} &\|D_w \tilde{\mathcal{F}}(\bar{w}(\alpha), \alpha) \|_{\mathscr{B}(\mathcal{W})} \nonumber \\
&= \sup_{\alpha \in [-1,1]} \max\Big(|\kappa(\alpha) \tilde{h}_0^{(1)}(\bar{x}_0(\alpha)) | + |\kappa(\alpha) \tilde{h}_0^{(2)}(\bar{x}_0(\alpha))|, |\kappa(\alpha) \tilde{h}_1^{(1)}(\bar{x}_1 (\alpha)) | + |\kappa(\alpha) \bar{u}(\alpha) \tilde{h}_1^{(2)}(\bar{x}_1 (\alpha))|\Big) \nonumber \\
&\le \|\kappa\|_{\ell^1} \max\Big(\sup_{\alpha \in [-1,1]}\big( |\tilde{h}_0^{(1)}(\bar{x}_0(\alpha)) | + |\tilde{h}_0^{(2)}(\bar{x}_0(\alpha))|\big), \sup_{\alpha \in [-1,1]} \big(|\tilde{h}_1^{(1)}(\bar{x}_1 (\alpha)) | + \|\bar{u}\|_{\ell^1} | \tilde{h}_1^{(2)}(\bar{x}_1 (\alpha))|\big)\Big),
\end{align}
where the following right hand side can also be computed using \eqref{der11} and interval arithmetic:
\begin{equation}
\sup_{\alpha \in [-1,1]} |\tilde{h}_i^{(2)} (\bar{x}_i(\alpha))|
\le \frac{\|(\bar{x}_i (\alpha) - \chi_i)^{N-1}\|_{\ell^1}}{(N-1)!} \sup_{\alpha \in [-1,1]}\, \sup_{\tau\in[0,1]} | h^{(N+1)} (\chi_i + \tau (\bar{x}_i (\alpha) - \chi_i)) |.
\end{equation}

\paragraph{\boxed{\text{$Z_2$ bound}}}
Pick $R >0$.
We need to find the upper bound
\begin{equation}
\mathcal{Z}_2 \ge \sup_{\alpha \in [-1,1]} \, \sup_{\substack{w \in B_R(\bar{w}(\alpha)) \\ w \ne \bar{w}(\alpha)}} \frac{\| \mathcal{A}(\alpha) (D_w \mathcal{F}(w, \alpha) - D_w \mathcal{F}(\bar{w}(\alpha), \alpha)) \|_{\mathscr{B}(\mathcal{W})}}{\| w - \bar{w}(\alpha) \|_\mathcal{W}}.
\end{equation}
Using the mean value theorem, we obtain
\begin{equation}
\mathcal{Z}_2 \ge \| \mathcal{A}(\cdot) \|_{\mathscr{B}(\mathcal{W}(\ell^1))} \sup_{\alpha \in[-1,1]} \, \sup_{w \in B_R(\bar{w}(\alpha))} \|D_w^2\mathcal{F}(w, \alpha)\|_{\mathscr{B}(\mathcal{W}, \mathscr{B}(\mathcal{W}))}.
\end{equation}
Here $\mathscr{B}(\mathcal{W}, \mathscr{B}(\mathcal{W}))$ denotes the space of bi-linear forms.
A straightforward computation yields
\begin{align}
\sup_{\alpha \in[-1,1]} \, &\sup_{w \in B_R(\bar{w}(\alpha))} \|D_w^2\mathcal{F} (w, \alpha)\|_{\mathscr{B}(\mathcal{W}, \mathscr{B}(\mathcal{W}))} \nonumber \\
&= \sup_{\alpha \in[-1,1]} \, \sup_{w \in B_R(\bar{w}(\alpha))} \, \max_{1\le j\le 4} \sum_{1\le l \le 4} \max_{1\le i\le 4} | \partial_{w_i} \partial_{w_j} \mathcal{F}_l (w, \alpha) | \nonumber \\
&\le \|\kappa\|_{\ell^1} \sup_{x \in\mathbb{R}} \, \max \Big(
|h^{(2)}(x)| + |h^{(3)}(x)|,
|h^{(2)}(x)| + \max\big(|h^{(2)}(x)|, (\| \bar{u}\|_{\ell^1} + R) |h^{(3)}(x)| \big)
\Big) \nonumber \\
&= \|\kappa\|_{\ell^1} \left(\frac{\sqrt{3}}{18} + \max\Big(\frac{\sqrt{3}}{18}, \max\big(1,\| \bar{u}\|_{\ell^1} + R\big) \frac{1}{8} \Big) \right),
\end{align}
where $\partial_{w_i}$ stands for the partial derivative with respect to the $i$-th component of $w \in \mathcal{W} = \mathbb{R}^4$, and $\mathcal{F}_l$ for the $l$-th component of $\mathcal{F}$.
Moreover, we used the fact that
\begin{equation}
\sup_{x \in \mathbb{R}} |h^{(2)}(x)| = \frac{\sqrt{3}}{18}, \qquad \sup_{x \in \mathbb{R}} |h^{(3)}(x)| = \frac{1}{8}.
\end{equation}
%


\textbf{Acknowledgments. }
We thank Jos\'e M. Arrieta for bringing us together, without whom these pages would not exist, and for several stimulating discussions.
P. Lappicy was supported by Marie Sklodowska--Curie Actions, UNA4CAREER H2020 Cofund, 847635, with the project DYNCOSMOS.

\bibliography{library}


\end{document}